\providecommand{\U}[1]{\protect\rule{.1in}{.1in}}
\newtheorem{theorem}{Theorem}[section]
\newtheorem{corollary}[theorem]{Corollary}
\newtheorem{lemma}[theorem]{Lemma}
\newtheorem{example}{Example}
\theoremstyle{definition}
\theoremstyle{remark}
\numberwithin{equation}{section}
\let\pdfoutput=\undefined\fi
\begin{document}
\pagestyle{myheadings}

\begin{center}
{\huge \textbf{A simultaneous decomposition of four real quaternion matrices encompassing $\eta$-Hermicity and its applications}}
\footnote{This research was supported by
the grants from the National Natural
Science Foundation of China (11571220).
\par
{}* Corresponding author. hzh19871126@126.com (Z.H. He),  wqw@t.shu.edu.cn, wqw369@yahoo.com (Q.W. Wang)}

\bigskip

{ \textbf{Zhuo-Heng He$^{a,b}$, Qing-Wen Wang$^{b,*}$}}

{\small
\vspace{0.25cm}

$a.$ Department of Mathematics and Statistics, Auburn University, AL 36849-5310, USA\\

$b.$ Department of Mathematics, Shanghai University, Shanghai 200444, P. R. China}

\end{center}

\begin{quotation}
\noindent\textbf{Abstract:} Let $\mathbb{H}$ be the real quaternion algebra and $\mathbb{H}^{m\times n}$ denote the set of all $m\times n$ matrices over $\mathbb{H}$. Let $\mathbf{i},\mathbf{j},\mathbf{k}$ be the imaginary quaternion units.  For $\eta\in\{\mathbf{i},\mathbf{j},\mathbf{k}\}$, a square real quaternion matrix $A$ is said to be $\eta$-Hermitian if $A^{\eta*}=A$ where $A^{\eta*}=-\eta A^{\ast}\eta$, and
$A^{\ast}$ stands for the conjugate transpose of $A$.  In this paper,  we  construct a  simultaneous decomposition of four real quaternion matrices with the same row number
$(A,B,C,D),$ where $A=A^{\eta*}\in \mathbb{H}^{m\times m},
B\in \mathbb{H}^{m\times p_{1}},C\in \mathbb{H}^{m\times p_{2}},D\in \mathbb{H}^{m\times p_{3}}$. As applications of  this simultaneous
matrix decomposition,   we derive necessary and sufficient conditions for some real quaternion matrix equations involving $\eta$-Hermicity in terms of
ranks of the coefficient matrices. We also present the general solutions to these  real quaternion matrix equations. Moreover, we provide some
numerical examples to illustrate our results.
\newline\noindent\textbf{Keywords:}   Matrix decomposition; Matrix equation; Quaternion; Solvability; General $\eta$-Hermitian solution; Rank\newline%
\noindent\textbf{2010 AMS Subject Classifications:\ }{\small 15A09, 15A23, 15A24, 15B33, 15B57}\newline
\end{quotation}

\section{\textbf{Introduction}}

Let $\mathbb{R}$ and $\mathbb{H}^{m\times
n}$ stand, respectively, for the real number field and the set of all $m\times n$ matrices over the real quaternion algebra
\[
\mathbb{H}%
=\big\{a_{0}+a_{1}\mathbf{i}+a_{2}\mathbf{j}+a_{3}\mathbf{k}\big|~\mathbf{i}^{2}=\mathbf{j}^{2}=\mathbf{k}^{2}=\mathbf{ijk}=-1,a_{0}%
,a_{1},a_{2},a_{3}\in\mathbb{R}\big\}.
\]
For $\eta\in\{\mathbf{i},\mathbf{j},\mathbf{k}\}$, a square real quaternion matrix $A$ is said to be $\eta$-Hermitian if $A^{\eta*}=A$ where $A^{\eta*}=-\eta A^{\ast}\eta$, and
$A^{\ast}$ stands for the conjugate transpose of $A$ (\cite{C. Cheong Took4}). The map $A\mapsto A^{\eta\ast}$  on $\mathbb{H}^{n\times n}$ is involutorial (\cite{wanghe2}). The symbol $r(A)$ stands for the rank of a given real quaternion matrix $A$. For a real quaternion matrix $A$, $r(A)=r(A^{\eta*})$ \cite{wanghe2}.   The identity matrix and zero matrix with appropriate sizes
are denoted by $I$ and $0$, respectively. The set of all $n\times n$ invertible matrix over $\mathbb{H}$ are denoted by $GL_{n}.$

The $\eta$-Hermitian matrices arise in
statistical signal processing and widely linear modelling  (\cite{C. Cheong Took1}-\cite{C. Cheong Took4}).  The decompositions of matrices and $\eta$-Hermitian matrices have applications in system and control theory, signal processing, linear modelling, engineering and so on (e.g., \cite{CHU3}-\cite{moorbelgium}, \cite{golub01}, \cite{alter03}-\cite{stewart}, \cite{C. Cheong Took4}). The study on the decompositions of $\eta$-Hermitian matrices
is active in recent years. The decomposition of an $\eta$-Hermitian  matrix was first proposed in 2011 (\cite{C. Cheong Took4}).  Horn and Zhang \cite{FZhang3} presented an analogous special singular value decomposition for $\eta$-Hermitian matrices. Very recently, He and Wang \cite{hewangamc2017} gave a  simultaneous decomposition for a set of nine real quaternion  matrices involving $\eta$-Hermicity
with  compatible sizes: $A_{i}\in\mathbb{H}^{p_{i}\times t_{i}},B_{i}\in\mathbb{H}^{p_{i}\times t_{i+1}},$ and $C_{i}\in\mathbb{H}^{p_{i}\times p_{i}},$ where $C_{i}$ are $\eta$-Hermitian matrices, $(i=1,2,3)$.

To the best of our knowledge, there is little information on the simultaneous decomposition of four real quaternion matrices with the same row number involving $\eta$-Hermicity:
\begin{align}\label{array01}
\bordermatrix{
~& m & p_{1}&p_{2}&p_{3} \cr
m&A&B&C&D},
\end{align}
where $B\in \mathbb{H}^{m\times p_{1}},C\in \mathbb{H}^{m\times p_{2}},D\in \mathbb{H}^{m\times p_{3}},$ and $A\in \mathbb{H}^{m\times m}$ is an $\eta$-Hermitian matrix. Motivated by the wide application of real quaternion matrices and $\eta$-Hermitian matrices and in order to improve the theoretical development of the decompositions of $\eta$-Hermitian matrices, we consider the simultaneous decomposition of four real quaternion matrices involving $\eta$-Hermicity (\ref{array01}). One contribution of this paper is to show how to find matrices $P\in GL_{m}(\mathbb{H}), T_{1}\in GL_{p_{1}}(\mathbb{H}),T_{2}\in GL_{p_{2}}(\mathbb{H}),T_{3}\in GL_{p_{3}}(\mathbb{H}),$ such that
\begin{align}\label{dec12}
PA P^{\eta*}=S_{A},\qquad PB T_{1}=S_{B},\qquad PC T_{2}=S_{C},\qquad PDT_{3}=S_{D},
\end{align}
where $S_{B},S_{C},S_{D}$ are quasi-diagonal matrices with the finest possible subdivision of matrices, and $S_{A}=S_{A}^{\eta*}$ have appropriate forms (see Theorem \ref{theorem01} for the definitions in details). We
conjecture that this simultaneous decomposition  will also play an important role in signal processing and linear modelling.

Using the simultaneous matrix decomposition (\ref{dec12}), we consider the following two real quaternion matrix equations involving $\eta$-Hermicity:
\begin{align}\label{4system001}
BXB^{\eta*}+CYC^{\eta*}+DZD^{\eta*}=A, ~X=X^{\eta*},~Y=Y^{\eta*},~Z=Z^{\eta*}
\end{align}
and
\begin{align}\label{4system002}
BXC+(BXC)^{\eta*}+DYD^{\eta*}=A,~Y=Y^{\eta*}.
\end{align}
where $A,B,C,$ and $D$ are given real quaternion matrices, $X,Y,Z$ are unknowns. We will make use of the simultaneous matrix decomposition (\ref{dec12}) that bring the real quaternion matrix equations (\ref{4system001}) and (\ref{4system002}) to some canonical forms. Then we can give some necessary and sufficient conditions for the existence of the general solutions to the real quaternion matrix equations (\ref{4system001}) and (\ref{4system002}) in terms of the ranks of the given coefficient matrices. There have been many papers using different approaches to investigate the matrix equations and real quaternion matrix equations involving $\eta$-Hermicity
(e.g., \cite{xibanyalama}-\cite{farid}, \cite{helaa}-\cite{heac2017}, \cite{wanghe4444444}-\cite{wangronghao}).

The rest of this paper is organized as follows. We in Section 2 construct a  simultaneous decomposition of four real quaternion matrices involving $\eta$-Hermicity (\ref{array01}). As applications of this simultaneous decomposition, we in Section 3 establish necessary and sufficient
conditions for the existence of the $\eta$-Hermitian solution to the real quaternion matrix equation (\ref{4system001}), and give an expression of this $\eta$-Hermitian solution when the solvability conditions are satisfied. In Section 4, we derive necessary and sufficient
conditions for the existence of the solution to the real quaternion matrix equation (\ref{4system002}), and present an expression of the general solution when the solvability conditions are satisfied.

\section{\textbf{A simultaneous decomposition of four real quaternion matrices (\ref{array01})}}

In this section, we establish a  simultaneous decomposition of four real quaternion matrices involving $\eta$-Hermicity (\ref{array01}).  We begin with the following lemma that is an important
tool for obtaining the main result.
\begin{lemma}\label{lemma00}\cite{QWWangandyushaowen}
Let $B\in \mathbb{H}^{m\times p_{1}},
C\in \mathbb{H}^{m\times p_{2}}$ and $D\in \mathbb{H}^{m\times p_{3}}$ be given.  Then there exist
 $P_{1}\in GL_{m}(\mathbb{H}),$ $W_{B}\in GL_{p_{1}}(\mathbb{H}),$ $
W_{C}\in GL_{p_{2}}(\mathbb{H}),$ and $W_{D}\in GL_{p_{3}}(\mathbb{H})$ such that
\begin{align*}
P_{1}BW_{B}=\widetilde{S_{B}}, \qquad P_{1}CW_{C}=\widetilde{S_{C}},\qquad P_{1}DW_{D}=\widetilde{S_{D}},
\end{align*}
where
\begin{align}\label{requ021}
\widetilde{S_{B}}=
\begin{pmatrix}
I&0\\0&0
\end{pmatrix}
\begin{matrix}
r(B)\\~
\end{matrix},\quad
\widetilde{S_{C}}=
\begin{pmatrix}
0&I&0\\
0&0&0\\
I&0&0\\
0&0&0
\end{pmatrix}
\begin{matrix}
r_{2}\\r(B)-r_{2}\\r_{1}\\~
\end{matrix},
\end{align}
\begin{align}
\widetilde{S_{D}}=\begin{pmatrix}
0 &0&0&0 &I&0\\
0 & 0&0 &0&0&0\\
0&0&0&I&0&0\\
0&I & 0&0&0&0\\
0 & 0&0 &0&0&0\\
0&I & 0&0&0&0\\
0 & 0&I&0&0&0\\
0 & 0&0 &0&0&0\\
I&0 & 0&0&0&0\\
0 & 0&0 &0&0&0
\end{pmatrix}
\begin{matrix}
r_{6}\\
r_{2}-r_{6}\\
r_{5}\\
r_{7}\\
r(B)-r_{2}-r_{5}-r_{7}\\
r_{7}\\
r_{4}-r_{7}\\
r_{1}-r_{4}\\
r_{3}\\
~
\end{matrix},
\end{align}
where
\begin{align*}
r_{1}&=r(B,~C)-r(B),~r_{2}=r(B)+r(C)-r(B,~C),\\
r_{3}&=r(B,~C,~D)-r(B,~C),\\
r_{4}&=r(B,~D)+r(B,~C)-r(B)-r(B,~C,~D),\\
r_{5}+r_{6}&=r(B)+r(D)-r(B,~D),\\
r_{5}+r_{7}&=r(B,~C)+r(C,~D)-r(B,~C,~D)-r(C).
\end{align*}

\end{lemma}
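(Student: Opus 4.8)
The plan is to regard the right factors $W_B,W_C,W_D$ as mere changes of basis on the three domains, so that the only intrinsic datum is the position of the column spaces $\mathcal{R}(B),\mathcal{R}(C),\mathcal{R}(D)$ inside $\mathbb{H}^m$ up to a single left change of basis $P_1\in GL_m(\mathbb{H})$; equivalently, one is decomposing a representation of the three-subspace (type $D_4$) quiver over the division ring $\mathbb{H}$, a problem of finite representation type, and the block patterns in $\widetilde{S_B},\widetilde{S_C},\widetilde{S_D}$ merely record the multiplicities of the indecomposable summands. I would carry this out constructively, by a three-stage Gaussian-type elimination that normalizes $B$, then $C$, then $D$, at each stage using only the freedom surviving from the earlier stages. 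Throughout, the key point is that over $\mathbb{H}$ rank, elementary row and column operations, and full-rank factorizations behave exactly as over a commutative field, so the argument is formally identical to the real or complex case provided one keeps track of sides.

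Stage~1: choose $P_1^{(0)}\in GL_m(\mathbb{H})$ and $W_B$ with $P_1^{(0)}BW_B=\mathrm{diag}(I_{r(B)},0)$. A brief computation shows that the left multipliers $P$ for which a suitable re-choice of $W_B$ still preserves this normal form are precisely those block upper triangular for the splitting $m=r(B)+\bigl(m-r(B)\bigr)$. Stage~2: write $P_1^{(0)}C=\begin{pmatrix}C_1\\ C_2\end{pmatrix}$ along that splitting; using free column operations (the new $W_C$) together with the lower diagonal block of $P$ one brings $C_2$ to $\mathrm{diag}(I,0)$, whose rank is $r(C_2)=r(B,C)-r(B)=r_1$; the off-diagonal block of $P$ then clears the part of $C_1$ lying over the identity of $C_2$, and the remaining column operations together with the upper diagonal block of $P$ reduce the leftover of $C_1$, of rank $r(C)-r_1=r_2$. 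Re-ordering the $m$ rows into the four strata thereby forced by $B$ and $C$ yields exactly $\widetilde{S_B}$ and $\widetilde{S_C}$ with block sizes $r_2,\,r(B)-r_2,\,r_1$.

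Stage~3 repeats this for $D$, now against the row partition already refined by $B$ and $C$, the residual left-multiplier group being block triangular for that finer partition. One peels off the interaction of $\mathcal{R}(D)$ with $\mathcal{R}(B)$, with $\mathcal{R}(C)$, with $\mathcal{R}(B,C)$, and with $\mathcal{R}(B,D)$ in turn; each identity block so created is forced to have rank one of $r_3,\dots,r_7$ or one of the combinations $r_5+r_6,\ r_5+r_7,\ r_4-r_7,\ r_1-r_4,\ r_2-r_6,\ r(B)-r_2-r_5-r_7$ appearing in $\widetilde{S_D}$. The relations among these counts and the nonnegativity of every block size follow from the dimension identities for sums of subspaces --- e.g. $\dim\bigl(\mathcal{R}(B,C)\cap\mathcal{R}(C,D)\bigr)=r(B,C)+r(C,D)-r(B,C,D)$ since both subspaces contain $\mathcal{R}(C)$, whence $r_5+r_7=\dim\bigl((\mathcal{R}(B,C)\cap\mathcal{R}(C,D))/\mathcal{R}(C)\bigr)\ge 0$, and likewise $r_1-r_4=r(B,C,D)-r(B,D)\ge 0$ and $r(B)-r_2-r_5-r_7=r(B,C,D)-r(C,D)\ge 0$ --- together with submodularity of rank. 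I expect the entire difficulty, and the only genuine obstacle, to lie in this last stage: tracking how the many strata of rows nest, checking that the residual operations really do annihilate every unwanted block, and verifying that each surviving block has the claimed rank, including the nontrivial compatibility that the single number $r_5$ must fit both independent constraints $r_5+r_6=r(B)+r(D)-r(B,D)$ and $r_5+r_7=r(B,C)+r(C,D)-r(B,C,D)-r(C)$. Once the three canonical forms are in hand, matching each block size with the displayed expression is merely a matter of reading off ranks of those forms.
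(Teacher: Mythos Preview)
The paper does not prove this lemma at all; it is quoted verbatim from the cited reference \cite{QWWangandyushaowen} (Wang, van der Woude, Yu) and used as a black box in the proof of Theorem~\ref{theorem01}. So there is no ``paper's own proof'' to compare against.

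That said, your plan is sound and is essentially how the cited reference proceeds: a staged reduction of $B$, then $C$, then $D$, using at each step only the block-triangular freedom in $P_1$ that survives from the earlier normalizations, over an arbitrary division ring. Your quiver remark is apt --- this is exactly the three-subspace problem of type $D_4$, of finite representation type, and the block pattern in $\widetilde{S_D}$ records the multiplicities of the indecomposables. Your identification of Stage~3 as the only real work is accurate; in particular, the duplicated $I_{r_7}$ blocks in row-strata $4$ and $6$ of $\widetilde{S_D}$ (both hitting the same column block) encode precisely the indecomposable summand in which a vector of $\mathcal{R}(D)$ decomposes as a sum of a vector in $\mathcal{R}(B)\setminus\mathcal{R}(C)$ and one in $\mathcal{R}(C)\setminus\mathcal{R}(B)$, and this is the source of the compatibility constraint on $r_5$ that you correctly flag. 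One small slip: after Stage~1 the residual left multipliers preserving $\widetilde{S_B}$ are block \emph{lower} triangular for the splitting $m=r(B)+(m-r(B))$ (the $(2,1)$ block is free, the $(1,2)$ block must vanish), not upper; this affects the order in which you clear blocks of $C_1,C_2$ but not the outcome.
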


Horn and Zhang \cite{FZhang3} presented an analogous special singular value decomposition for an $\eta$-Hermitian matrix.

\begin{lemma}
\label{lemma01}
(\cite{FZhang3})
Suppose that $A$ is $\eta$-Hermitian. Then there is a unitary matrix $U$ such that
\begin{align*}
UAU^{\eta*}=\begin{pmatrix}\Sigma&0\\0&0\end{pmatrix},
\end{align*}where $\Sigma=diag(\sigma_{1},\sigma_{2},\cdots,\sigma_{r})$ and
$\sigma_{1},\sigma_{2},\cdots,\sigma_{r}$ are  real positive  singular values of $A$.
\end{lemma}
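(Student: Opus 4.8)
The plan is to follow the classical proof of the Autonne--Takagi factorization of a complex symmetric matrix, with complex conjugation replaced by a suitable inner automorphism of $\mathbb{H}$. Let $\theta\colon\mathbb{H}\to\mathbb{H}$ be conjugation by $\eta^{-1}=-\eta$, i.e. $\theta(x)=-\eta x\eta$, and extend it entrywise to a map $\Theta$ on quaternion matrices. Routine bookkeeping gives: $\theta$ is an involutive $\mathbb{R}$-algebra automorphism with $\theta(\bar x)=\overline{\theta(x)}$; hence $\Theta$ is additive and multiplicative, $\Theta^{2}=\mathrm{id}$, $\Theta(A^{*})=\Theta(A)^{*}$, and $\Theta$ sends unitary matrices to unitary matrices and preserves rank (and, via Hermitian squares, singular values). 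The key point is the identity $A^{\eta*}=-\eta A^{*}\eta=\Theta(A^{*})=\Theta(A)^{*}$, so that $A=A^{\eta*}$ is equivalent to $\Theta(A^{*})=A$, i.e. $\Theta(A)=A^{*}$. (If one wishes, a preliminary conjugation of the whole problem by a fixed unit quaternion normalizes $\eta=\mathbf{i}$, but this does not simplify matters.)

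First I would reduce the theorem to the following claim: if $A=A^{\eta*}\neq 0$ has largest singular value $\sigma>0$, then there is a unit vector $u\in\mathbb{H}^{m}$ with $A\,\Theta(u)=\sigma u$. Granting it, extend $u$ to an orthonormal basis $u=u_{1},u_{2},\dots,u_{m}$ and let $U$ be the unitary matrix with rows $u_{1}^{*},\dots,u_{m}^{*}$. Using $U^{\eta*}=\Theta(U^{*})$ (so that column $t$ of $U^{\eta*}$ is $\Theta(u_{t})$), together with $A^{*}u=\sigma\,\Theta(u)$ (obtained by applying $\Theta$ to $A\Theta(u)=\sigma u$ and using $\Theta(A)=A^{*}$) and orthonormality, a direct computation shows the first row and column of $UAU^{\eta*}$ are $(\sigma,0,\dots,0)$, so $UAU^{\eta*}=\sigma\oplus A'$. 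Moreover $UAU^{\eta*}$ is again $\eta$-Hermitian because $(XYZ)^{\eta*}=Z^{\eta*}Y^{\eta*}X^{\eta*}$ and $(\cdot)^{\eta*}$ is involutive, whence $A'=(A')^{\eta*}$, and the singular values of $A'$ are those of $A$ with one copy of $\sigma$ removed, since $UAU^{\eta*}$ is obtained from $A$ by left and right multiplication by unitaries ($U^{\eta*}=\Theta(U)^{*}$ is unitary). Induction on the size, followed by conjugating $A'$ back up by $1\oplus(\text{its unitary})$ and composing the two unitaries, produces $U$ with $UAU^{\eta*}=\operatorname{diag}(\sigma_{1},\dots,\sigma_{r})\oplus 0$, where the $\sigma_{i}$ are exactly the positive singular values and $r=r(A)$ (cf. $r(A)=r(A^{\eta*})$).

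It remains to prove the claim. Take an ordinary singular pair of unit vectors $v,w$ for the top singular value, $Aw=\sigma v$ and $A^{*}v=\sigma w$. Applying $\Theta$ to the second equation and using $\Theta(A^{*})=A$ gives $A\,\Theta(v)=\sigma\,\Theta(w)$, hence $u:=v+\Theta(w)$ satisfies $A\,\Theta(u)=A\,\Theta(v)+A\,\Theta(\Theta(w))=\sigma\,\Theta(w)+Aw=\sigma(v+\Theta(w))=\sigma u$; after dividing by its (real) norm, $u$ is the desired vector, \emph{provided} $u\neq 0$. The only obstacle is exactly this nonvanishing. If $v+\Theta(w)=0$, replace $(v,w)$ by $(vq,wq)$ for a unit quaternion $q$; this is again a singular pair of unit vectors for $\sigma$, while $vq+\Theta(wq)=vq+\Theta(w)\theta(q)=v\bigl(q-\theta(q)\bigr)$, and $q-\theta(q)=q+\eta q\eta$ equals twice the component of $q$ orthogonal to $\operatorname{span}_{\mathbb{R}}\{1,\eta\}$; choosing $q$ to be any unit imaginary quaternion orthogonal to $\eta$ makes this nonzero. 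Thus a nonzero $u$ is always available and the claim, hence the theorem, follows. The genuinely delicate parts are getting the $\Theta$-bookkeeping exactly right (in particular $U^{\eta*}=\Theta(U^{*})$ and the preservation of unitarity and of the singular spectrum) and disposing of the degenerate case above; everything else is the standard deflation induction.
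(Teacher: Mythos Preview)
Your argument is correct. The paper itself does not prove this lemma; it is quoted verbatim from Horn and Zhang \cite{FZhang3} and used as a black box in the proof of Theorem~\ref{theorem01}. Your proof is the natural quaternionic analogue of the classical Autonne--Takagi deflation argument, which is essentially the approach of the cited reference: one peels off the top singular value using a vector $u$ satisfying $A\,\Theta(u)=\sigma u$, then recurses on the $\eta$-Hermitian residual block. The $\Theta$-bookkeeping (in particular $U^{\eta*}=\Theta(U^{*})$, $\Theta(A)=A^{*}$, and the preservation of unitarity and singular spectrum under $\Theta$) is handled correctly, and your treatment of the degenerate case $v+\Theta(w)=0$ by right-multiplying the singular pair by a unit quaternion $q$ with $q+\eta q\eta\neq 0$ is a clean way to guarantee a nonzero $u$. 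Since the paper offers no alternative proof to compare against, there is nothing further to contrast.
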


Now we give the main theorem of this paper.

\begin{theorem}\label{theorem01}
Let $A=A^{\eta*}\in \mathbb{H}^{m\times m},
B\in \mathbb{H}^{m\times p_{1}},C\in \mathbb{H}^{m\times p_{2}},$ and $D\in \mathbb{H}^{m\times p_{3}}$ be given. Then there exist
 $P\in GL_{m}(\mathbb{H}), ~T_{1}\in GL_{p_{1}}(\mathbb{H}),~
T_{2}\in GL_{p_{2}}(\mathbb{H}),~T_{3}\in GL_{p_{3}}(\mathbb{H}),$ such that%
\begin{align}\label{4equ021}
PA P^{\eta*}=S_{A},\qquad PB T_{1}=S_{B},\qquad PC T_{2}=S_{C},\qquad PDT_{3}=S_{D},
\end{align}
where
\begin{align}\label{4equ022}
S_{A}=S_{A}^{\eta*}=\begin{pmatrix}
A_{11} & \cdots&   A_{19}&   A_{1,10}&0\\
\vdots& \ddots&   \vdots&\vdots&\vdots\\
A_{19}^{\eta*}& \cdots&A_{99}&A_{9,10}& 0\\
A_{1, 10}^{\eta*}& \cdots&A_{9,10}^{\eta*}&0&0\\0&\cdots&0 &0&\Sigma
\end{pmatrix},
\end{align}
\begin{align}\label{4equ023}
S_{B}=\begin{pmatrix}\begin{smallmatrix}
I_{m_{1}} &0&0&0&0&0\\
0 & I_{m_{2}}&0&0&0&0\\
0 & 0&I_{m_{3}}&0&0&0\\
0 & 0&0&I_{m_{4}}&0&0\\
0 & 0&0&0&I_{m_{5}}&0\\
0 & 0&0 &0&0&0\\
0 & 0&0 &0&0&0\\
0 & 0&0 &0&0&0\\
0 & 0&0 &0&0&0\\
0 & 0&0 &0&0&0\\
0 & 0&0 &0&0&0\end{smallmatrix}\end{pmatrix}
,
S_{C}=\begin{pmatrix}\begin{smallmatrix}
0 &0&0&I_{m_{1}}&0&0\\
0 & 0&0&0&I_{m_{2}}&0\\
0 & 0&0 &0&0&0\\
0 & 0&0 &0&0&0\\
0 & 0&0 &0&0&0\\
I_{m_{4}} & 0&0&0&0&0\\
0 & I_{m_{6}}&0&0&0&0\\
0 & 0&I_{m_{7}}&0&0&0\\
0 & 0&0 &0&0&0\\
0 & 0&0 &0&0&0\\
0 & 0&0 &0&0&0\end{smallmatrix}\end{pmatrix},
S_{D}=\begin{pmatrix}\begin{smallmatrix}
0 &0&0&0 &I_{m_{1}}&0\\
0 & 0&0 &0&0&0\\
0&0&0&I_{m_{3}}&0&0\\
0&I_{m_{4}} & 0&0&0&0\\
0 & 0&0 &0&0&0\\
0&I_{m_{4}} & 0&0&0&0\\
0 & 0&I_{m_{6}}&0&0&0\\
0 & 0&0 &0&0&0\\
I_{m_{8}}&0 & 0&0&0&0\\
0 & 0&0 &0&0&0\\
0 & 0&0 &0&0&0\end{smallmatrix}\end{pmatrix},
\end{align}
where $\Sigma$ is a diagonal and nonsingular matrix, and
\begin{align}
r(\Sigma)=r\begin{pmatrix}A&B&C&D\\B^{\eta*}&0&0&0\\C^{\eta*}&0&0&0\\D^{\eta*}&0&0&0\end{pmatrix}-2r(B,C,D),
\end{align}
\begin{align}\label{4equ024}
m_{1}=r(D)+r(B)+r(C)-r\begin{pmatrix}D&B&0\\D&0&C\end{pmatrix},
\end{align}
\begin{align}
m_{2}=r\begin{pmatrix}D&B&0\\D&0&C\end{pmatrix}-r(B,~C)-r(D),
~
m_{3}=r\begin{pmatrix}D&B&0\\D&0&C\end{pmatrix}-r(B,~D)-r(C),
\end{align}
\begin{align}
m_{4}=r(B,~C)+r(C,~D)+r(B,~D)-r(B,~C,~D)-r\begin{pmatrix}D&B&0\\D&0&C\end{pmatrix},
\end{align}
\begin{align}
m_{5}=r(B,~C,~D)-r(C,~D),
~
m_{6}=r\begin{pmatrix}D&B&0\\D&0&C\end{pmatrix}-r(C,~D)-r(B),
\end{align}
\begin{align}\label{4equ028}
m_{7}=r(B,~C,~D)-r(B,~D),
~
m_{8}=r(B,~C,~D)-r(B,~C),
\end{align}

\end{theorem}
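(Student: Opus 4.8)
The plan is to build the transformation matrix $P$ and the block decomposition in two stages: first use Lemma~\ref{lemma00} to simultaneously normalize the triple $(B,C,D)$, then correct the resulting transformation so that it also acts by congruence (the $P\,\cdot\,P^{\eta*}$ action) on $A$ in a controlled way, and finally peel off the $\eta$-Hermitian part supported on the complement of the column space of $(B,C,D)$ using Lemma~\ref{lemma01}. Concretely, I would first apply Lemma~\ref{lemma00} to obtain $P_1\in GL_m(\mathbb H)$ and $W_B,W_C,W_D$ bringing $B,C,D$ to the canonical forms $\widetilde{S_B},\widetilde{S_C},\widetilde{S_D}$. This $P_1$ is not yet the required $P$: replacing $B$ by $P_1 B W_B$ etc.\ changes $A$ to $P_1 A P_1^{\eta*}$, which is still $\eta$-Hermitian (since $(P_1 A P_1^{\eta*})^{\eta*}=P_1 A^{\eta*}P_1^{\eta*}=P_1 A P_1^{\eta*}$ by involutoriality of $\eta*$), but has no special shape yet.

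The second step is the heart of the argument. The row partition of $m$ induced by Lemma~\ref{lemma00} splits $\mathbb H^m$ into the block where $B$ (resp.\ $C$, resp.\ $D$) has its identity pivots and the trailing zero block; write $m = r(B,C,D)+\ell$ where $\ell$ is the size of the common zero rows of all three of $\widetilde{S_B},\widetilde{S_C},\widetilde{S_D}$. Partitioning $P_1 A P_1^{\eta*}$ conformally as $\begin{pmatrix}A'&A''\\(A'')^{\eta*}&A'''\end{pmatrix}$ with $A'$ of size $r(B,C,D)$, I would left-multiply by a further matrix of block form $\begin{pmatrix}I&0\\X&I\end{pmatrix}$ (this does not disturb the zero trailing rows of $S_B,S_C,S_D$ because those rows get added into by $X$ only from the pivot rows, which is harmless — one must check the bookkeeping) to clean up $A''$ into the single off-diagonal block $A_{\ast,10}$ appearing in \eqref{4equ022}, and then apply Lemma~\ref{lemma01} to the surviving $\eta$-Hermitian matrix in the trailing $\ell\times\ell$ position to diagonalize it as $\Sigma$. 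The eight index blocks $m_1,\dots,m_8$ are exactly the sizes of the pieces in the common refinement of the three row partitions coming from $\widetilde{S_B},\widetilde{S_C},\widetilde{S_D}$; these are the $r_i$'s of Lemma~\ref{lemma00} re-expressed in the symmetric rank formulas \eqref{4equ024}--\eqref{4equ028}, and verifying that identification is a finite linear-algebra computation with rank identities (e.g.\ $m_1 = r(B)+r(C)+r(D)-r\begin{pmatrix}D&B&0\\D&0&C\end{pmatrix}$ follows by comparing the stacked-matrix rank with the pivot counts). Finally the absorbed $T_i$ are $W_B,W_C,W_D$ composed with whatever column operations the refinement required, and the rank of $\Sigma$ is computed by the standard trick of evaluating the rank of the bordered matrix $\begin{pmatrix}A&B&C&D\\B^{\eta*}&0&0&0\\C^{\eta*}&0&0&0\\D^{\eta*}&0&0&0\end{pmatrix}$ after substituting the decomposition, which reduces it to $r(\Sigma)+2r(B,C,D)$.

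The main obstacle I anticipate is \emph{congruence-compatibility}: unlike the nine-matrix decomposition of \cite{hewangamc2017} where one has independent left and right multipliers for each matrix, here the same $P$ appears on both sides of $A$ as $P A P^{\eta*}$, so any row operation used to simplify $B,C,D$ simultaneously forces the corresponding symmetric column operation on $A$. The delicate point is to show that the row operations supplied by Lemma~\ref{lemma00} — which were chosen only to normalize $(B,C,D)$ — leave enough freedom (namely the block-lower-triangular corrections within the zero-row block, plus the unitary freedom of Lemma~\ref{lemma01}) to force $S_A$ into the asserted bordered shape with the clean $\Sigma$ corner and the single column $A_{\ast,10}$. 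Making the partition of $S_A$ match the $m_i$ partition, and checking that no spurious off-diagonal blocks survive outside the displayed pattern in \eqref{4equ022}, is where the real care is needed; everything else is rank arithmetic.
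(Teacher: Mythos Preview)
Your overall architecture matches the paper's: apply Lemma~\ref{lemma00} to normalize $(B,C,D)$, then adjust $P$ within the freedom that preserves $S_B,S_C,S_D$ so that $PAP^{\eta*}$ takes the form \eqref{4equ022}. But two concrete details in your second step are off and would make the argument fail as written.

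First, the block--triangular correction must be \emph{upper}-triangular, not lower. Left-multiplying by $\begin{pmatrix}I&0\\X&I\end{pmatrix}$ adds $X\cdot(\text{pivot rows})$ into the trailing zero rows of $S_B,S_C,S_D$, which destroys their canonical shape; your parenthetical ``harmless'' is exactly backwards. The paper uses $P_3=\begin{pmatrix}I&Y\\0&I\end{pmatrix}$, which adds multiples of the (already zero) trailing rows into the pivot rows and hence preserves $S_B,S_C,S_D$.

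Second, the order of your two sub-steps must be swapped. With the upper-triangular $P_3$, the congruence sends the off-diagonal block $A''$ to $A''+Y A'''$, so you can only kill the portion of $A''$ lying over the invertible part of $A'''$. Therefore one must \emph{first} apply Lemma~\ref{lemma01} to the trailing $\eta$-Hermitian block $A'''$ to split it as $\operatorname{diag}(0,\Sigma)$ with $\Sigma$ nonsingular, and \emph{then} choose $Y=(0,\,-A''_2\Sigma^{-1})$ to annihilate only the column sitting over $\Sigma$. The column over the zero part cannot be removed and is precisely the surviving $A_{\ast,10}$ in \eqref{4equ022}.

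Finally, you are overestimating the ``spurious off-diagonal blocks'' issue: the displayed $S_A$ in \eqref{4equ022} imposes no constraint on $A_{ij}$ for $1\le i,j\le 9$ beyond $\eta$-Hermitian symmetry, so once the last column and the $(10,10)$ block are handled there is nothing further to check. With the two fixes above your proof coincides with the paper's.
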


\begin{proof}
It follows from Lemma \ref{lemma00} that there exist four   matrices
$P_{1}\in GL_{m}(\mathbb{H}),$ $W_{B}\in GL_{p_{1}}(\mathbb{H}),$ $
W_{C}\in GL_{p_{2}}(\mathbb{H}),$ and $W_{D}\in GL_{p_{3}}(\mathbb{H})$ such that
\begin{align*}
P_{1}(B,C,D)\begin{pmatrix}W_{B}&0&0\\0&W_{C}&0\\0&0&W_{D}\end{pmatrix}
=\end{align*}
\begin{align*}
\begin{pmatrix}\begin{matrix} I &0&0&0&0&0\\
0 & I&0&0&0&0\\
0 & 0&I&0&0&0\\
0 & 0&0&I&0&0\\
0 & 0&0&0&I&0\\
0 & 0&0 &0&0&0\\
0 & 0&0 &0&0&0\\
0 & 0&0 &0&0&0\\
0 & 0&0 &0&0&0\\
0 & 0&0 &0&0&0
\end{matrix}~&
\begin{matrix} 0 &0&0&I&0&0\\
0 & 0&0&0&I&0\\
0 & 0&0 &0&0&0\\
0 & 0&0 &0&0&0\\
0 & 0&0 &0&0&0\\
I & 0&0&0&0&0\\
0 & I&0&0&0&0\\
0 & 0&I&0&0&0\\
0 & 0&0 &0&0&0\\
0 & 0&0 &0&0&0
\end{matrix}~&
\begin{matrix} 0 &0&0&0 &I&0\\
0 & 0&0 &0&0&0\\
0&0&0&I&0&0\\
0&I & 0&0&0&0\\
0 & 0&0 &0&0&0\\
0&I & 0&0&0&0\\
0 & 0&I&0&0&0\\
0 & 0&0 &0&0&0\\
I&0 & 0&0&0&0\\
0 & 0&0 &0&0&0
\end{matrix}
\end{pmatrix}
\begin{matrix} m_{1}\\
m_{2}\\
m_{3}\\
m_{4}\\
m_{5}\\
m_{4}\\
m_{6}\\
m_{7}\\
m_{8}\\
m-r(B,C,D)
\end{matrix}.
\end{align*}
Let
\begin{align*}
P_{1}AP_{1}^{\eta*}=P_{1}A^{\eta*}P_{1}^{\eta*}\triangleq
\begin{pmatrix}
A_{11}^{(1)} & \cdots&   A_{1,10}^{(1)}\\
\vdots& \ddots&   \vdots\\
 A_{1,10}^{(1)\eta*} & \cdots&A_{10,10}^{(1)}
\end{pmatrix},
\end{align*}
where the symbol $\triangleq$ means ``equals by definition''.  Now we pay attention to the $\eta$-Hermitian matrix $A_{10,10}^{(1)}$. By Lemma \ref{lemma01}, we can find a unitary matrix $P_{2}$  such that
\begin{align*}
P_{2}A_{10,10}^{(1)}P_{2}^{\eta*}=\begin{pmatrix}0&0\\0&\Sigma\end{pmatrix},
\end{align*}where $\Sigma$ is a diagonal and nonsingular matrix, and $r(\Sigma)=r(A_{10,10}^{(1)}).$
Then we have
\begin{align*}
&\begin{pmatrix}I_{r(B,C,D)}&0\\0&P_{2}\end{pmatrix}\begin{pmatrix}
A_{11}^{(1)} & \cdots&   A_{1,10}^{(1)}\\
\vdots& \ddots&   \vdots\\
A_{1,10}^{(1)*}& \cdots&A_{10,10}^{(1)}
\end{pmatrix}\begin{pmatrix}I_{r(B,C,D)}&0\\0&P_{2}\end{pmatrix}^{\eta*}\\&\triangleq
\begin{pmatrix}
A_{11}^{(2)} & \cdots&   A_{19}^{(2)}&   A_{1,10}^{(2)}& A_{1,11}^{(2)}\\
\vdots& \ddots&   \vdots&\vdots&\vdots\\
A_{19}^{(2)\eta*} & \cdots&A_{99}^{(2)}&A_{9,10}^{(2)}& A_{9,11}^{(2)}\\
A_{1,10}^{(2)\eta*} & \cdots&A_{9,10}^{(2)\eta*}&0&0\\A_{1,11}^{(2)\eta*} &\cdots&A_{9,11}^{(2)\eta*} &0&\Sigma
\end{pmatrix},
\end{align*}
\begin{align*}
\begin{pmatrix} I_{r(B,C,D)}&0\\0&P_{2} \end{pmatrix}P_{1}(B,C,D)\begin{pmatrix}
 W_{B}&0&0\\0&W_{C}&0\\0&0&W_{D} \end{pmatrix}
=
\end{align*}
\begin{align*}
\begin{pmatrix}\begin{matrix} I &0&0&0&0&0\\
0 & I&0&0&0&0\\
0 & 0&I&0&0&0\\
0 & 0&0&I&0&0\\
0 & 0&0&0&I&0\\
0 & 0&0 &0&0&0\\
0 & 0&0 &0&0&0\\
0 & 0&0 &0&0&0\\
0 & 0&0 &0&0&0\\
0 & 0&0 &0&0&0\\
0 & 0&0 &0&0&0
\end{matrix}~&
\begin{matrix} 0 &0&0&I&0&0\\
0 & 0&0&0&I&0\\
0 & 0&0 &0&0&0\\
0 & 0&0 &0&0&0\\
0 & 0&0 &0&0&0\\
I & 0&0&0&0&0\\
0 & I&0&0&0&0\\
0 & 0&I&0&0&0\\
0 & 0&0 &0&0&0\\
0 & 0&0 &0&0&0\\
0 & 0&0 &0&0&0
\end{matrix}~&
\begin{matrix} 0 &0&0&0 &I&0\\
0 & 0&0 &0&0&0\\
0&0&0&I&0&0\\
0&I & 0&0&0&0\\
0 & 0&0 &0&0&0\\
0&I & 0&0&0&0\\
0 & 0&I&0&0&0\\
0 & 0&0 &0&0&0\\
I&0 & 0&0&0&0\\
0 & 0&0 &0&0&0\\
0 & 0&0 &0&0&0
\end{matrix}
\end{pmatrix}
\begin{matrix} m_{1}\\
m_{2}\\
m_{3}\\
m_{4}\\
m_{5}\\
m_{4}\\
m_{6}\\
m_{7}\\
m_{8}\\
m-r(B,C,D)-r(\Sigma)\\
r(\Sigma)
\end{matrix}.
\end{align*}
Let
\begin{align*}
P_{3}=\begin{pmatrix}I_{r_{bcd}}&\begin{pmatrix}0&-A_{1,11}^{(2)}\\
\vdots&\vdots\\0&-A_{9,11}^{(2)}\end{pmatrix}\\0&I_{m-r_{bcd}}\end{pmatrix}.
\end{align*}
Then we obtain
\begin{align*}
P_{3}\begin{pmatrix}
A_{11}^{(2)} & \cdots&   A_{19}^{(2)}&   A_{1,10}^{(2)}& A_{1,11}^{(2)}\\
\vdots& \ddots&   \vdots&\vdots&\vdots\\
A_{19}^{(2)\eta*} & \cdots&A_{99}^{(2)}&A_{9,10}^{(2)}& A_{9,11}^{(2)}\\
A_{1,10}^{(2)\eta*} & \cdots&A_{9,10}^{(2)\eta*}&0&0\\A_{1,11}^{(2)\eta*} &\cdots&A_{9,11}^{(2)\eta*} &0&\Sigma
\end{pmatrix}P_{3}^{\eta*}\triangleq
\begin{pmatrix}
A_{11} & \cdots&   A_{19}&   A_{1,10}&0\\
\vdots& \ddots&   \vdots&\vdots&\vdots\\
A_{19}^{\eta*}& \cdots&A_{99}&A_{9,10}& 0\\
A_{1,10}^{\eta*}& \cdots&A_{9,10}^{\eta*}&0&0\\0 &\cdots&0 &0&\Sigma
\end{pmatrix}.
\end{align*}
Let
\begin{align*}
P\triangleq P_{3}\begin{pmatrix}I_{r(B,C,D)}&0\\0&P_{2}\end{pmatrix}P_{1},
~
T_{1}=W_{C},~T_{2}=W_{D},~T_{3}=W_{E}.
\end{align*}
Hence,  the matrices  $P\in GL_{m}(\mathbb{H}),~T_{1}\in GL_{p_{1}}(\mathbb{H}),~
T_{2}\in GL_{p_{2}}(\mathbb{H}),$ and $T_{3}\in GL_{p_{3}}(\mathbb{H})$ satisfy the equation (\ref{4equ021}). Now we want to give the dimensions of
$r(\Sigma),m_{1},\ldots,m_{8}.$ It is easy to verify that
\begin{align*}
r(\Sigma)=r\begin{pmatrix}A&B&C&D\\B^{\eta*}&0&0&0\\C^{\eta*}&0&0&0\\D^{\eta*}&0&0&0\end{pmatrix}-2r(B,C,D).
\end{align*}It follows from $S_{A},S_{B},S_{C},$ and $S_{D}$ in
(\ref{4equ022})-(\ref{4equ023}) that
\begin{align*}
\begin{pmatrix}
1&1&1&1&1&0&0&0\\
1&1&0&1&0&1&1&0\\
1&0&1&1&0&1&0&1\\
1&1&1&2&1&1&1&0\\
1&1&1&2&1&1&0&1\\
1&1&1&2&0&1&1&1\\
1&1&1&2&1&1&1&1\\
1&0&1&1&0&1&0&1\end{pmatrix}\begin{pmatrix}m_{1}\\
m_{2}\\
m_{3}\\
m_{4}\\
m_{5}\\
m_{6}\\
m_{7}\\
m_{8}\end{pmatrix}=\begin{pmatrix}r(B)\\
r(C)\\
r(D)\\
r(B,C)\\
r(B,D)\\
r(C,D)\\
r(B,C,D)\\
r\begin{pmatrix}\begin{smallmatrix}D&B&0\\D&0&C\end{smallmatrix}\end{pmatrix}-r(B)-r(C)\end{pmatrix}.
\end{align*}
Solving for $m_{i},(i=1,\ldots,8)$ gives (\ref{4equ024})-(\ref{4equ028}).
\end{proof}

Let $D$ vanish in Theorem \ref{theorem01}, then we obtain the simultaneous decomposition of a matrix triplet with the same row numbers
\begin{align*}
(A,~B,~C),
\end{align*}
where $A$ is an $\eta$-Hermitian matrix.

\begin{corollary}
Let $A=A^{\eta*}\in \mathbb{H}^{m\times m},
B\in \mathbb{H}^{m\times p_{1}},$ and $C\in \mathbb{H}^{m\times p_{2}}$ be given. Then there exist
 $P\in GL_{m}(\mathbb{H}), ~T_{1}\in GL_{p_{1}}(\mathbb{H}),~
T_{2}\in GL_{p_{2}}(\mathbb{H}),$ such that%
\begin{align*}
PA P^{\eta*}=S_{A},\qquad PB T_{1}=S_{B},\qquad PC T_{2}=S_{C},
\end{align*}
where
\begin{align*}
(S_{A},~S_{B},~S_{C})=
\bordermatrix{
~& \cr
  \begin{matrix}n_{1}\\n_{2}\\n_{3}\\ \\n_{4}\end{matrix}&
  \begin{matrix}
A_{11}^{1}&A_{12}^{1}&A_{13}^{1}&A_{14}^{1}&0\\
(A_{12}^{1})^{\eta*}&A_{22}^{1}&A_{23}^{1}&A_{24}^{1}&0\\
(A_{13}^{1})^{\eta*}&(A_{23}^{1})^{\eta*}&A_{33}^{1}&A_{34}^{1}&0\\
(A_{14}^{1})^{\eta*}&(A_{24}^{1})^{\eta*}&(A_{34}^{1})^{\eta*}&0&0\\
0&0&0&0&\Sigma_{1}
\end{matrix}~&~
\begin{matrix}
I&0&0\\
0&I&0\\
0&0&0\\
0&0&0\\
0&0&0
\end{matrix}~&~
\begin{matrix}
I&0&0\\
0&0&0\\
0&I&0\\
0&0&0\\
0&0&0
\end{matrix}
},
\end{align*}
where
\begin{align*}
n_{1}=r(B)+r(C)-r(B,~C),~n_{2}=r(B,~C)-r(C),~n_{3}=r(B,~C)-r(B),
\end{align*}
\begin{align*}
n_{4}=r\begin{pmatrix}A&B&C\\B^{\eta*}&0&0\\C^{\eta*}&0&0\end{pmatrix}-2r(B,~C).
\end{align*}

\end{corollary}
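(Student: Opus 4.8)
The plan is to obtain the corollary simply by specializing Theorem~\ref{theorem01} to the case $D=0$, so the real work is bookkeeping: tracking which of the nine block rows and eight integers $m_1,\dots,m_8$ survive and how the formulas collapse. First I would set $D=0$ (so $p_3=0$) in the statement of Theorem~\ref{theorem01}. Inspecting the definition of $S_D$ in (\ref{4equ023}), every block becomes empty, which forces the column partition sizes carrying labels $m_3,m_5,m_7,m_8$ (those appearing \emph{only} through $D$) to vanish; concretely, the rank identities defining $m_i$ reduce via $r(D)=0$, $r(B,D)=r(B)$, $r(C,D)=r(C)$, $r(B,C,D)=r(B,C)$, and $r\left(\begin{smallmatrix}D&B&0\\ D&0&C\end{smallmatrix}\right)=r(B)+r(C)$, and one checks directly that $m_3=m_5=m_7=m_8=0$ while $m_2$ and $m_6$ also vanish, leaving only $m_1=r(B)+r(C)-r(B,C)$, $m_4=r(B,C)-r(B)$, and the block labelled $m_4$ appearing in $S_C$ but not (after deletion) clashing. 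Here I should be careful: after $D$ vanishes the two separate row bands that were labelled $m_4$ in Theorem~\ref{theorem01} no longer both carry content, so in the corollary there are only three nonzero structural bands plus the $\Sigma_1$ band; relabelling $m_1\mapsto n_1$, $m_4\mapsto$ (the band that becomes $n_2$ in $S_B$'s column structure), and the remaining $S_C$-only band $\mapsto n_3$ recovers $n_1=r(B)+r(C)-r(B,C)$, $n_2=r(B,C)-r(C)$, $n_3=r(B,C)-r(B)$.

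For the size of the $\Sigma_1$ block I would specialize the formula $r(\Sigma)=r\left(\begin{smallmatrix}A&B&C&D\\ B^{\eta*}&0&0&0\\ C^{\eta*}&0&0&0\\ D^{\eta*}&0&0&0\end{smallmatrix}\right)-2r(B,C,D)$ by deleting the block column and block row containing $D$ and $D^{\eta*}$; since those are zero except for $D$ itself, the rank of the big matrix drops to $r\left(\begin{smallmatrix}A&B&C\\ B^{\eta*}&0&0\\ C^{\eta*}&0&0\end{smallmatrix}\right)$ and $r(B,C,D)=r(B,C)$, giving exactly $n_4=r\left(\begin{smallmatrix}A&B&C\\ B^{\eta*}&0&0\\ C^{\eta*}&0&0\end{smallmatrix}\right)-2r(B,C)$ as claimed. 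The matrices $P,T_1,T_2$ are just the restrictions of those produced by Theorem~\ref{theorem01} (with $W_D$ and the parts of $P_1,P_2,P_3$ acting on the $D$-coordinates suppressed), and they remain invertible; the $\eta$-Hermitian shape of $S_A$ is inherited verbatim, with $S_A$ now a $4\times 4$ (rather than $10\times 10$) block array whose last diagonal block is $\Sigma_1$ and whose penultimate block row/column is zero in the $(4,4)$ position.

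The only genuine obstacle is verifying the claim that the collapse of the row/column structure of Theorem~\ref{theorem01} really produces the specific four-band form displayed in the corollary — i.e.\ that no two surviving bands need to be merged and that the $A_{ij}^1$ blocks index as shown. I would do this by writing out $S_B$ and $S_C$ from (\ref{4equ023}) with the $m_3,m_5,m_7,m_8$ rows and the corresponding columns deleted, then observing that the nonzero rows that remain are precisely the three with identity blocks $I_{m_1}$, $I_{m_4}$ (from $S_C$ only), and $I_{m_4}$-labelled band that matches $S_B$'s second identity — after renaming these $n_1,n_2,n_3$ the pattern $\left(\begin{smallmatrix}I&0&0\\0&I&0\\0&0&0\\0&0&0\\0&0&0\end{smallmatrix}\right)$ for $S_B$ and $\left(\begin{smallmatrix}I&0&0\\0&0&0\\0&I&0\\0&0&0\\0&0&0\end{smallmatrix}\right)$ for $S_C$ falls out. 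Since all of this is pure substitution into an already-proved theorem, no new estimates or constructions are needed, and the proof can be stated in a few lines: ``Apply Theorem~\ref{theorem01} with $D=0$; the stated rank formulas follow by substituting $r(D)=0$, $r(B,D)=r(B)$, $r(C,D)=r(C)$, $r(B,C,D)=r(B,C)$ into (\ref{4equ024})--(\ref{4equ028}) and into the formula for $r(\Sigma)$.''
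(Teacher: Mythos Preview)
Your approach — specialize Theorem~\ref{theorem01} to $D=0$ — is exactly what the paper does (it introduces the corollary with ``Let $D$ vanish in Theorem~\ref{theorem01}''), and it is the right idea. However, your bookkeeping is wrong in a way that makes the collapse not work as you describe. Substituting $r(D)=0$, $r(B,D)=r(B)$, $r(C,D)=r(C)$, $r(B,C,D)=r(B,C)$, and $r\bigl(\begin{smallmatrix}D&B&0\\D&0&C\end{smallmatrix}\bigr)=r(B)+r(C)$ into (\ref{4equ024})--(\ref{4equ028}) gives
\[
m_{1}=m_{3}=m_{4}=m_{6}=m_{8}=0,\qquad m_{2}=r(B)+r(C)-r(B,C),\quad m_{5}=r(B,C)-r(C),\quad m_{7}=r(B,C)-r(B).
\]
So the surviving bands are $m_{2},m_{5},m_{7}$ (which become $n_{1},n_{2},n_{3}$), not $m_{1}$ and $m_{4}$ as you state. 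In particular your claimed value $m_{4}=r(B,C)-r(B)$ is incorrect (it is $0$), and your relabelling ``$m_{4}\mapsto n_{2}$'' cannot produce $n_{2}=r(B,C)-r(C)$. With the correct identification, rows $2,5,8$ of $S_{B},S_{C}$ in (\ref{4equ023}) carry exactly the pattern $(I,I)$, $(I,0)$, $(0,I)$ required by the corollary, and the rest of your argument (the $n_{4}=r(\Sigma)$ computation and the inheritance of the $\eta$-Hermitian block structure of $S_{A}$) goes through unchanged.
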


\section{\textbf{Solvability conditions and general $\eta$-Hermitian solution to (\ref{4system001})}}

In this section, we give some solvability conditions for the real quaternion matrix equation (\ref{4system001}) to possess an $\eta$-Hermitian
solution and to present an expression of this $\eta$-Hermitian solution when the solvability
conditions are met.   A numerical example is also given to illustrate the main result.

\begin{theorem}\label{theorem04}
Let $A=A^{\eta*}\in \mathbb{H}^{m\times m},
B\in \mathbb{H}^{m\times p_{1}},C\in \mathbb{H}^{m\times p_{2}},$ and $D\in \mathbb{H}^{m\times p_{3}}$ be given.  Then the real quaternion matrix equation (\ref{4system001}) has an $\eta$-Hermitian solution $(X,Y,Z)$ if and only if the ranks satisfy:
\begin{align}\label{rank01}
r(A,B,C,D)=r(B,C,D),~r\begin{pmatrix}A&B&C\\D^{\eta*}&0&0\end{pmatrix}=r(B,C)+r(D),
\end{align}
\begin{align}
r\begin{pmatrix}A&B&D\\C^{\eta*}&0&0\end{pmatrix}=r(B,D)+r(C),
~r\begin{pmatrix}A&C&D\\B^{\eta*}&0&0\end{pmatrix}=r(C,D)+r(B),
\end{align}
\begin{align}\label{rank02}
r\begin{pmatrix}0&D^{\eta*}&D^{\eta*}&0&0\\
D&-A&0&0&B\\
D&0&A&C&0\\
0&C^{\eta*}&0&0&0\\
0&0&B^{\eta*}&0&0\end{pmatrix}=2r\begin{pmatrix}D&B&0\\D&0&C\end{pmatrix}.
\end{align}
In this case, the general $\eta$-Hermitian solution to (\ref{4system001}) can be expressed as
\begin{align*}
X=T_{1}\widehat{X}T_{1}^{\eta*},\quad Y=T_{2}\widehat{Y}T_{2}^{\eta*},\quad Z=T_{3}\widehat{Z}T_{3}^{\eta*},
\end{align*}
where
\begin{align}\label{equh033}
\widehat{X}=\widehat{X}^{\eta*}=\bordermatrix{
& m_{1}&m_{2} & m_{3}&m_{4}&m_{5}&p_{1}-r(B) \cr
&X_{11}&X_{12}&X_{13}&X_{14}&A_{15}&X_{16} \cr
&X_{12}^{\eta*}&X_{22}&A_{23}&A_{24}&A_{25}&X_{26} \cr
&X_{13}^{\eta*}&A_{23}^{\eta*}&X_{33}&A_{34}-A_{36}&A_{35}&X_{36} \cr
&X_{14}^{\eta*}&A_{24}^{\eta*}&(A_{34}-A_{36})^{\eta*}&A_{44}-A_{46}&A_{45}&X_{46} \cr
&A_{15}^{\eta*}&A_{25}^{\eta*}&A_{35}^{\eta*}&A_{45}^{\eta*}&A_{55}&X_{56} \cr
&X_{16}^{\eta*}&X_{26}^{\eta*}&X_{36}^{\eta*}&X_{46}^{\eta*}&X_{56}^{\eta*}&X_{66}},
\end{align}
\begin{align}
\widehat{Y}=\widehat{Y}^{\eta*}=\bordermatrix{
& m_{4}&m_{6} & m_{7}&m_{1}&m_{2}&p_{2}-r(C) \cr
&A_{66}-A_{46}&A_{67}-A_{47}&A_{68}&A_{16}^{\eta*}-A_{14}^{\eta*}+X_{14}^{\eta*}&A_{26}^{\eta*}&Y_{16} \cr
&(A_{67}-A_{47})^{\eta*}&Y_{22}&A_{78}&Y_{24}&A_{27}^{\eta*}&Y_{26} \cr
&A_{68}^{\eta*}&A_{78}^{\eta*}&A_{88}&A_{18}^{\eta*}&A_{28}^{\eta*}&Y_{36} \cr
&A_{16}-A_{14}+X_{14}&Y_{24}^{\eta*}&A_{18}&Y_{44}&A_{12}-X_{12}&Y_{46} \cr
&A_{26}&A_{27}&A_{28}&(A_{12}-X_{12})^{\eta*}&A_{22}-X_{22}&Y_{56} \cr
&Y_{16}^{\eta*}&Y_{26}^{\eta*}&Y_{36}^{\eta*}&Y_{46}^{\eta*}&Y_{56}^{\eta*}&Y_{66}},
\end{align}
\begin{align}\label{equhh035}
\widehat{Z}=\widehat{Z}^{\eta*}=\bordermatrix{
& m_{8}&m_{4} & m_{6}&m_{3}&m_{1}&p_{3}-r(D) \cr
&A_{99}&A_{69}^{\eta*}&A_{79}^{\eta*}&A_{39}^{\eta*}&A_{19}^{\eta*}&Z_{16} \cr
&A_{69}&A_{46}&A_{47}&A_{36}^{\eta*}&(A_{14}-X_{14})^{\eta*}&Z_{26} \cr
&A_{79}&A_{47}^{\eta*}&A_{77}-Y_{22}&A_{37}^{\eta*}&A_{17}^{\eta*}-Y_{24}&Z_{36} \cr
&A_{39}&A_{36}&A_{37}&A_{33}-X_{33}&(A_{13}-X_{13})^{\eta*}&Z_{46} \cr
&A_{19}&A_{14}-X_{14}&A_{17}-Y_{24}^{\eta*}&A_{13}-X_{13}&Z_{55}&Z_{56} \cr
&Z_{16}^{\eta*}&Z_{26}^{\eta*}&Z_{36}^{\eta*}&Z_{46}^{\eta*}&Z_{56}^{\eta*}&Z_{66}},
\end{align}
in which $X_{11},X_{22},X_{33},X_{66},Y_{22},Y_{44},Y_{66},Z_{55},$ and $Z_{66}$  are arbitrary
$\eta$-Hermitian matrices over $\mathbb{H}$ with appropriate sizes,  the remaining $X_{ij},Y_{ij},Z_{ij}$  are arbitrary matrices over $\mathbb{H}$
with appropriate sizes.
\end{theorem}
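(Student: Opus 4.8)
The plan is to reduce (\ref{4system001}) to a canonical equation via Theorem~\ref{theorem01}, solve that equation blockwise, and then read off both the solvability conditions and the general $\eta$-Hermitian solution. By Theorem~\ref{theorem01} choose $P\in GL_m(\mathbb H)$ and $T_i\in GL_{p_i}(\mathbb H)$ with $PAP^{\eta*}=S_A$, $PBT_1=S_B$, $PCT_2=S_C$, $PDT_3=S_D$. Multiplying (\ref{4system001}) on the left by $P$ and on the right by $P^{\eta*}$ and inserting factors $T_iT_i^{-1}$, and using that $M\mapsto M^{\eta*}$ is an anti-automorphism with $(M^{\eta*})^{\eta*}=M$ while left/right multiplication by invertible matrices preserves rank, one sees that $(X,Y,Z)$ is an $\eta$-Hermitian solution of (\ref{4system001}) if and only if $\widehat X:=T_1^{-1}X(T_1^{-1})^{\eta*}$, $\widehat Y:=T_2^{-1}Y(T_2^{-1})^{\eta*}$, $\widehat Z:=T_3^{-1}Z(T_3^{-1})^{\eta*}$ are $\eta$-Hermitian and solve
\begin{align*}
S_B\widehat X S_B^{\eta*}+S_C\widehat Y S_C^{\eta*}+S_D\widehat Z S_D^{\eta*}=S_A,
\end{align*}
the solution of (\ref{4system001}) being recovered as $X=T_1\widehat X T_1^{\eta*}$, $Y=T_2\widehat Y T_2^{\eta*}$, $Z=T_3\widehat Z T_3^{\eta*}$.

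Next I would partition $\widehat X,\widehat Y,\widehat Z$ conformally with the column block sizes of $S_B,S_C,S_D$ and compare the two sides of the reduced equation inside the $11\times11$ block grid of $S_A$. Since $S_B,S_C,S_D$ are $0$--$1$ matrices whose nonzero parts are collections of identity blocks, each of $S_B\widehat X S_B^{\eta*}$, $S_C\widehat Y S_C^{\eta*}$, $S_D\widehat Z S_D^{\eta*}$ simply transplants a sub-block of the corresponding unknown into a prescribed block of the grid, so every block equation has the form $(S_A)_{ij}=(\text{a block of }\widehat X)+(\text{a block of }\widehat Y)+(\text{a block of }\widehat Z)$ with some summands possibly absent. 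The one delicate feature is that $I_{m_4}$ occurs \emph{twice} in $S_D$ (in block rows $4$ and $6$), which makes several $\widehat Z$-blocks appear in two block equations at once: such a block is pinned down by the equation in which it is the unique unknown, and the companion equation then becomes a constraint on $A,B,C,D$. Going through the grid one finds (i) that block rows/columns $10$ and $11$ receive no contribution, forcing $A_{i,10}=0$ for all $i$ and $\Sigma=0$; (ii) a finite further list of block equations forcing prescribed combinations of the $A_{ij}$ to vanish; and (iii) that all remaining blocks may be chosen freely, giving after back-substitution exactly the parametrization (\ref{equh033})--(\ref{equhh035}), with $X_{11},X_{22},\dots$ the residual free $\eta$-Hermitian parameters and the off-diagonal $X_{ij},Y_{ij},Z_{ij}$ the residual free ones.

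To turn conditions (i)--(ii) into the rank identities (\ref{rank01})--(\ref{rank02}), observe that for each bordered matrix appearing there, left/right multiplication by block-diagonal invertible matrices formed from $P,T_1,T_2,T_3$ replaces $A,B,C,D$ by $S_A,S_B,S_C,S_D$; for instance
\begin{align*}
r\begin{pmatrix}A&B&C\\D^{\eta*}&0&0\end{pmatrix}=r\begin{pmatrix}S_A&S_B&S_C\\S_D^{\eta*}&0&0\end{pmatrix},
\end{align*}
and similarly for the other borders and for $r\!\left(\begin{smallmatrix}A&B&C&D\\B^{\eta*}&0&0&0\\C^{\eta*}&0&0&0\\D^{\eta*}&0&0&0\end{smallmatrix}\right)$. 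Each right-hand side is then computed by elementary block operations that sweep out the identity blocks of $S_B,S_C,S_D$; the residual rank equals a fixed partial sum of $r(B),r(C),r(D)$ plus the rank of a matrix built from exactly the $A_{ij}$ (and $\Sigma$) occurring in the forced block equations. Comparing with the prescribed right-hand sides shows that (\ref{rank01})--(\ref{rank02}) hold precisely when all those forced blocks vanish and $\Sigma=0$, i.e.\ exactly conditions (i)--(ii); in particular $r(A,B,C,D)=r(B,C,D)$ encodes ``$A_{i,10}=0$, $\Sigma=0$'', while (\ref{rank02}), coming from the most entangled constraint produced by the double $I_{m_4}$ in $S_D$, is the identity genuinely tying $B$, $C$ and $D$ together. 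For sufficiency one substitutes the explicit $\widehat X,\widehat Y,\widehat Z$ of (\ref{equh033})--(\ref{equhh035}) into the reduced equation and checks the block identities directly, which is routine.

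I expect the main obstacle to be organizational rather than conceptual: faithfully tracking every block equation of the $11\times11$ system — especially the coupled ones created by the repeated $I_{m_4}$ in $S_D$ — selecting the free blocks so that the parametrization emerges exactly as in (\ref{equh033})--(\ref{equhh035}), and above all verifying that the conjunction of all the scalar consistency conditions is genuinely \emph{equivalent} to the five rank identities, with no condition lost and none spurious. Pinning down the precise form of the coupling identity (\ref{rank02}) should require the most care; the individual block-rank reductions proving each identity equals its claimed value are mechanical.
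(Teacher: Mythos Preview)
Your proposal is correct and follows essentially the same approach as the paper: reduce via Theorem~\ref{theorem01} to the canonical equation $S_B\widehat{X}S_B^{\eta*}+S_C\widehat{Y}S_C^{\eta*}+S_D\widehat{Z}S_D^{\eta*}=S_A$, compare blocks in the $11\times 11$ grid to extract the forced conditions (\ref{equh031})--(\ref{equh032}) and the free parameters (\ref{equh033})--(\ref{equhh035}), and then translate the block vanishing conditions into the rank identities (\ref{rank01})--(\ref{rank02}) by substituting $S_A,S_B,S_C,S_D$ and performing elementary block operations. You have also correctly singled out the repeated $I_{m_4}$ in $S_D$ as the source of the coupled constraints (in particular $A_{49}=A_{69}$ and $A_{46}=A_{46}^{\eta*}$), the latter being exactly what (\ref{rank02}) encodes.
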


\begin{proof}
 Observe that the dimensions of the coefficient matrices $A,B,C,$ and $D$ in the real quaternion matrix equation (\ref{4system001}) have the same number of rows. Hence, the coefficient matrices $A,B,C,D$ can be arranged in the following matrix array
\begin{align*}
\begin{pmatrix}A&B&C&D\end{pmatrix}.
\end{align*}It follows from Theorem \ref{theorem01} that there exist
 $P\in GL_{m}(\mathbb{H}), ~T_{1}\in GL_{p_{1}}(\mathbb{H}),~
T_{2}\in GL_{p_{2}}(\mathbb{H}),~T_{3}\in GL_{p_{3}}(\mathbb{H}),$ such that%
\begin{align*}
PA P^{\eta*}=S_{A},\qquad PB T_{1}=S_{B},\qquad PC T_{2}=S_{C},\qquad PDT_{3}=S_{D},
\end{align*}where $S_{A},S_{B},S_{C},$ and $S_{D}$ are given in (\ref{4equ022}) and (\ref{4equ023}). Hence the matrix equation (\ref{4system001}) is equivalent to the matrix equation
\begin{align*}
P^{-1}S_{B}(T_{1}XT_{1}^{\eta*})S_{B}^{\eta*}P^{-\eta*}+P^{-1}S_{C}(T_{2}YT_{2}^{\eta*})S_{C}^{\eta*}P^{-\eta*}
+P^{-1}S_{D}(T_{3}ZT_{3}^{\eta*})S_{D}^{\eta*}P^{-\eta*}=P^{-1}S_{A}P^{-\eta*},
\end{align*}
i.e.,
\begin{align}\label{equh035}
S_{B}(T_{1}XT_{1}^{\eta*})S_{B}^{\eta*}+S_{C}(T_{2}YT_{2}^{\eta*})S_{C}^{\eta*}+S_{D}(T_{3}ZT_{3}^{\eta*})S_{D}^{\eta*}=S_{A}.
\end{align}
Let the matrices
\begin{align}\label{equh036}
\widehat{X}=T_{1}^{-1}XT_{1}^{-\eta*}=\begin{pmatrix}X_{11}&\cdots&X_{16}\\
\vdots&\ddots&\vdots\\
X_{16}^{\eta*}&\cdots&X_{66}\end{pmatrix}=\widehat{X}^{\eta*},
\end{align}
\begin{align}
\widehat{Y}=T_{2}^{-1}YT_{2}^{-\eta*}=\begin{pmatrix}Y_{11}&\cdots&Y_{16}\\
\vdots&\ddots&\vdots\\
Y_{16}^{\eta*}&\cdots&Y_{66}\end{pmatrix}=\widehat{Y}^{\eta*},
\end{align}
\begin{align}\label{equh038}
\widehat{Z}=T_{3}^{-1}ZT_{3}^{-\eta*}=\begin{pmatrix}Z_{11}&\cdots&Z_{16}\\
\vdots&\ddots&\vdots\\
Z_{61}&\cdots&Z_{66}\end{pmatrix}=\widehat{Z}^{\eta*},
\end{align}
be partitioned in accordance with (\ref{equh035}). Substituting $\widehat{X},\widehat{Y},$ and $\widehat{Z}$ of (\ref{equh036})-(\ref{equh038}) into (\ref{equh035}) yields
\begin{align*}
\begin{pmatrix}
\scriptstyle  X_{11}+Y_{44}+Z_{55}&\scriptstyle   X_{12}+Y_{45}&\scriptstyle   X_{13}+Z_{45}^{\eta*}&\scriptstyle   X_{14}+Z_{25}^{\eta*}&\scriptstyle   X_{15} &\scriptstyle    Y_{14}^{\eta*}+Z_{25}^{\eta*} &\scriptstyle   Y_{24}^{\eta*}+Z_{35}^{\eta*}&\scriptstyle    Y_{34}^{\eta*}&\scriptstyle    Z_{15}^{\eta*} &\scriptstyle      0 &\scriptstyle    0 \\
\scriptstyle  X_{12}^{\eta*}+Y_{45}^{\eta*}&\scriptstyle   X_{22}+Y_{55}&\scriptstyle   X_{23}&\scriptstyle   X_{24}&\scriptstyle   X_{25} &\scriptstyle    Y_{15}^{\eta*} &\scriptstyle   Y_{25}^{\eta*}&\scriptstyle    Y_{35}^{\eta*} &\scriptstyle    0 &\scriptstyle      0 &\scriptstyle    0 \\
\scriptstyle   X_{13}^{\eta*}+Z_{45}&\scriptstyle   X_{23}^{\eta*}&\scriptstyle   X_{33}+Z_{44}&\scriptstyle   X_{34}+Z_{24}^{\eta*}&\scriptstyle   X_{35} &\scriptstyle    Z_{24}^{\eta*} &\scriptstyle   Z_{34}^{\eta*} &\scriptstyle   0 &\scriptstyle    Z_{14}^{\eta*} &\scriptstyle     0&\scriptstyle    0 \\
\scriptstyle  X_{14}^{\eta*}+Z_{25}&\scriptstyle   X_{24}^{\eta*}&\scriptstyle   X_{34}^{\eta*}+Z_{24}&\scriptstyle   X_{44}+Z_{22}&\scriptstyle   X_{45} &\scriptstyle    Z_{22}&\scriptstyle   Z_{23} &\scriptstyle   0&\scriptstyle    Z_{12}^{\eta*} &\scriptstyle     0 &\scriptstyle    0 \\
\scriptstyle  X_{15}^{\eta*}&\scriptstyle   X_{25}^{\eta*}&\scriptstyle   X_{35}^{\eta*}&\scriptstyle   X_{45}^{\eta*}&\scriptstyle   X_{55} &\scriptstyle    0 &\scriptstyle   0 &\scriptstyle    0 &\scriptstyle    0 &\scriptstyle      0 &\scriptstyle    0\\
\scriptstyle  Y_{14}+Z_{25} &\scriptstyle    Y_{15} &\scriptstyle    Z_{24} &\scriptstyle   Z_{22} &\scriptstyle    0 &\scriptstyle    Y_{11} +Z_{22}&\scriptstyle   Y_{12} +Z_{23} &\scriptstyle   Y_{13}&\scriptstyle   Z_{12}^{\eta*} &\scriptstyle      0 &\scriptstyle    0 \\
\scriptstyle  Y_{24}+Z_{35} &\scriptstyle    Y_{25} &\scriptstyle    Z_{34} &\scriptstyle   Z_{23}^{\eta*} &\scriptstyle    0 &\scriptstyle    Y_{12}^{\eta*} +Z_{23}^{\eta*}&\scriptstyle   Y_{22}+Z_{33} &\scriptstyle   Y_{23} &\scriptstyle     Z_{13}^{\eta*}&\scriptstyle     0 &\scriptstyle    0 \\
\scriptstyle  Y_{34} &\scriptstyle   Y_{35}&\scriptstyle    0 &\scriptstyle   0 &\scriptstyle    0 &\scriptstyle    Y_{13}^{\eta*} &\scriptstyle   Y_{23}^{\eta*} &\scriptstyle   Y_{33} &\scriptstyle    0 &\scriptstyle     0 &\scriptstyle    0\\
\scriptstyle   Z_{15} &\scriptstyle    0&\scriptstyle    Z_{14} &\scriptstyle   Z_{12} &\scriptstyle    0 &\scriptstyle    Z_{12} &\scriptstyle   Z_{13} &\scriptstyle    0 &\scriptstyle    Z_{11} &\scriptstyle      0 &\scriptstyle    0 \\
\scriptstyle   0 &\scriptstyle    0 &\scriptstyle   0 &\scriptstyle   0 &\scriptstyle   0 &\scriptstyle    0 &\scriptstyle   0 &\scriptstyle    0 &\scriptstyle   0 &\scriptstyle      0 &\scriptstyle   0\\
\scriptstyle  0 &\scriptstyle    0 &\scriptstyle    0 &\scriptstyle    0 &\scriptstyle      0 &\scriptstyle    0 &\scriptstyle    0&\scriptstyle    0 &\scriptstyle    0 &\scriptstyle    0 &\scriptstyle     0
\end{pmatrix}
\end{align*}
\begin{align}\label{equ0048}
=\begin{pmatrix}
A_{11} & \cdots&   A_{19}&   A_{1,10}&0\\
\vdots& \ddots&   \vdots&\vdots&\vdots\\
A_{19}^{\eta*}& \cdots&A_{99}&A_{9,10}& 0\\
A_{1, 10}^{\eta*}& \cdots&A_{9,10}^{\eta*}&0&0\\0&\cdots&0 &0&\Sigma
\end{pmatrix}.
\end{align}

If the equation (\ref{4system001}) has an $\eta$-Hermitian solution $(X,Y,Z)$, then by (\ref{equ0048}), we obtain that
\begin{align}\label{equh031}
\Sigma=0,~A_{49}=A_{69},~A_{46}=A_{46}^{\eta*},
~ \left(A_{1,10}^{\eta*},~  \cdots, ~A_{9,10}^{\eta*}\right)=0,
\end{align}
\begin{align}\label{equh032}
A_{29}=0, ~A_{38}=0, ~A_{48}=0, ~A_{56}=0,~A_{57}=0,~A_{58}=0,~A_{59}=0,~A_{89}=0.
\end{align}
and
\begin{align*}
&X_{11}+Y_{44}+Z_{55}=A_{11},~X_{12}+Y_{45}=A_{12},~X_{13}+Z_{54}=A_{13},~X_{14}+Z_{52}=A_{14},~X_{15}=A_{15}, \\
&Y_{41}+Z_{52}=A_{16},~Y_{42}+Z_{53}=A_{17},~ Y_{43}=A_{18},~ Z_{51}=A_{19},X_{21}+Y_{54}=A_{21},~X_{22}+Y_{55}=A_{22},\\
&X_{23}=A_{23},~X_{24}=A_{24},~X_{25}=A_{25},~Y_{51}=A_{26},~Y_{52}=A_{27},~Y_{53}=A_{28},~X_{31}+Z_{45}=A_{31}, \\
&X_{32}=A_{32}, ~X_{33}+Z_{44}=A_{33}, ~X_{34}+Z_{42}=A_{34}, ~X_{35}=A_{35}, ~ Z_{42}=A_{36}, ~Z_{43}=A_{37}, ~ Z_{41}=A_{39},\\
&X_{41}+Z_{25}=A_{41}, ~X_{42}=A_{42}, ~X_{43}+Z_{24}=A_{43}, ~X_{44}+Z_{22}=A_{44}, ~X_{45}=A_{45}, ~ Z_{22}=A_{46}, \\
&Z_{23}=A_{47},  ~ Z_{21}=A_{49}, ~X_{51}=A_{51}, ~X_{52}=A_{52}, ~X_{53}=A_{53}, ~X_{54}=A_{54}, ~X_{55}=A_{55}\\
&Y_{14}+Z_{25}=A_{61}, ~ Y_{15}=A_{62}, ~ Z_{24}=A_{63}, ~Z_{22}=A_{64},  ~ Y_{11}+Z_{22}=A_{66}, ~Y_{12}+Z_{23}=A_{67}, \\
& Y_{13}=A_{68}, ~Z_{21}=A_{69},  ~Y_{24}+Z_{35}=A_{71}, ~ Y_{25}=A_{72}, ~ Z_{34}=A_{73}, ~Z_{32}=A_{74},  ~ Y_{21} +Z_{32}=A_{76}, \\
& Y_{22}+Z_{33}=A_{77}, ~Y_{23}=A_{78} , ~  Z_{31}=A_{79},  ~Y_{34}=A_{81}, ~Y_{35}=A_{82}, ~ Y_{31}=A_{86}, ~Y_{32}=A_{87}, \\
& Y_{33}=A_{88},~Z_{15}=A_{91},  ~ Z_{14}=A_{93}, ~Z_{12}=A_{94},  ~ Z_{12}=A_{96}, ~Z_{13}=A_{97},   ~ Z_{11}=A_{99}.
\end{align*}
Hence, the general $\eta$-Hermitian solution $(X,Y,Z)$ can be expressed as (\ref{equh033})-(\ref{equhh035}) by (\ref{equ0048}).

Conversely, assume that the equalities in (\ref{equh031}) and (\ref{equh032}) hold, then by (\ref{equh036})-(\ref{equ0048}), it can be
verified that the matrices have the forms of (\ref{equh033})-(\ref{equhh035}) is an $\eta$-Hermitian solution of (\ref{equh035}), i.e., (\ref{4system001}).

We now show that (\ref{rank01})-(\ref{rank02}) $\Longleftrightarrow$ (\ref{equh031}) and (\ref{equh032}). From $S_{A},S_{B},S_{C},$ and $S_{D}$ in Theorem \ref{theorem01}, we can infer that
\begin{align*}
r(A,~B,~C,~D)=r(B,~C,~D) \Longleftrightarrow \left(A_{1,10}^{\eta*},~  \cdots, ~A_{9,10}^{\eta*}\right)=0,~\Sigma=0,
\end{align*}
\begin{align*}
r\begin{pmatrix}A&B&C\\D^{\eta*}&0&0\end{pmatrix}=r(B,C)+r(D)\Longleftrightarrow A_{29}=0,~A_{89}=0,~A_{49}=A_{69},~\Sigma=0,
\end{align*}
\begin{align*}
r\begin{pmatrix}A&B&D\\C^{\eta*}&0&0\end{pmatrix}=r(B,D)+r(C)\Longleftrightarrow A_{38}=0,~A_{48}=0,~A_{58}=0,~A_{89}=0,~\Sigma=0,
\end{align*}
\begin{align*}
~r\begin{pmatrix}A&C&D\\B^{\eta*}&0&0\end{pmatrix}=r(C,D)+r(B)\Longleftrightarrow A_{56}=0,~A_{57}=0,~A_{58}=0,~A_{59}=0,~\Sigma=0,
\end{align*}
 \begin{align*}
r\begin{pmatrix}0&D^{\eta*}&D^{\eta*}&0&0\\
D&-A&0&0&B\\
D&0&A&C&0\\
0&C^{\eta*}&0&0&0\\
0&0&B^{\eta*}&0&0\end{pmatrix}=2r\begin{pmatrix}D&B&0\\D&0&C\end{pmatrix}
\Longleftrightarrow  A_{46}=A_{46}^{\eta*},~\Sigma=0.
\end{align*}

\end{proof}

Now we give an example to illustrate Theorem \ref{theorem04}.

\begin{example}
Given the real quaternion matrices:
\begin{align*}
B=\begin{pmatrix}\mathbf{i}+\mathbf{j}+\mathbf{k}&1&1+\mathbf{i}+\mathbf{j}-\mathbf{k}\\
-1-\mathbf{j}+\mathbf{k}&\mathbf{i}&-1+\mathbf{i}+\mathbf{j}+\mathbf{k}\end{pmatrix},
C=\begin{pmatrix}1&2\mathbf{i}+\mathbf{j}&-1+\mathbf{k}\\ \mathbf{i}+\mathbf{k}&1+\mathbf{i}+\mathbf{j}-\mathbf{k}&0\end{pmatrix},
\end{align*}
\begin{align*}
D=\begin{pmatrix}\mathbf{j}+2\mathbf{k}&\mathbf{i}+\mathbf{k}&\mathbf{j}\\ -2\mathbf{j}+\mathbf{k}&-1-\mathbf{j}&\mathbf{k}\end{pmatrix},
A=A^{\mathbf{j}*}=\begin{pmatrix}-1+5\mathbf{i}-20\mathbf{k}&-25-2\mathbf{i}-17\mathbf{j}-5\mathbf{k}\\
-25-2\mathbf{i}+17\mathbf{j}-5\mathbf{k}&-9-18\mathbf{i}-14\mathbf{k}\end{pmatrix}.
\end{align*}
Now we consider the $\mathbf{j}$-Hermitian solution to the real quaternion matrix equation (\ref{4system001}). Check that
\begin{align*}
r(A,B,C,D)=r(B,C,D)=2,
\end{align*}
\begin{align*}
r\begin{pmatrix}A&B&C\\D^{\eta*}&0&0\end{pmatrix}=r(B,C)+r(D)=3,
\end{align*}
\begin{align*}
r\begin{pmatrix}A&B&D\\C^{\eta*}&0&0\end{pmatrix}=r(B,D)+r(C)=3,
\end{align*}
\begin{align*}
r\begin{pmatrix}A&C&D\\B^{\eta*}&0&0\end{pmatrix}=r(C,D)+r(B)=3,
\end{align*}
\begin{align*}
r\begin{pmatrix}0&D^{\eta*}&D^{\eta*}&0&0\\
D&-A&0&0&B\\
D&0&A&C&0\\
0&C^{\eta*}&0&0&0\\
0&0&B^{\eta*}&0&0\end{pmatrix}=2r\begin{pmatrix}D&B&0\\D&0&C\end{pmatrix}=6.
\end{align*}
All the rank equalities in (\ref{rank01})-(\ref{rank02}) hold. Hence, the real quaternion matrix equation (\ref{4system001}) has a $\mathbf{j}$-Hermitian solution $(X,Y,Z)$. Note that
\begin{align*}
X=X^{\mathbf{j}*}=\begin{pmatrix}1&\mathbf{i}+\mathbf{k}&0\\ \mathbf{i}+\mathbf{k}&1+\mathbf{i}&1-\mathbf{k}\\0&1-\mathbf{k}&0\end{pmatrix},
~
Y=Y^{\mathbf{j}*}=\begin{pmatrix}
0&1+\mathbf{i}&\mathbf{k}\\1+\mathbf{i}&\mathbf{i}&2\mathbf{k}\\ \mathbf{k}&2\mathbf{k}&1
\end{pmatrix},
\end{align*}
\begin{align*}
Z=Z^{\mathbf{j}*}=\begin{pmatrix}\mathbf{i}&\mathbf{i}-\mathbf{k}&\mathbf{k}\\
\mathbf{i}-\mathbf{k}&\mathbf{i}&1\\
\mathbf{k}&1&1\end{pmatrix}
\end{align*}
satisfy the real quaternion matrix equation (\ref{4system001}).
\end{example}

\section{\textbf{The solution to (\ref{4system002}) with $Y$ being $\eta$-Hermitian}}

In this section, we consider the real quaternion matrix equation (\ref{4system002}). We derive necessary and sufficient conditions for (\ref{4system002}) in terms of ranks of the coefficient matrices. We also give the general solution to this  real quaternion matrix equation. A numerical example is also given to illustrate the main result.

\begin{theorem}\label{theorem05}
Let $A=A^{\eta*}\in \mathbb{H}^{m\times m},
B\in \mathbb{H}^{m\times p_{1}},C\in \mathbb{H}^{p_{2}\times m},$ and $D\in \mathbb{H}^{m\times p_{3}}$ be given.  Then the real quaternion matrix equation (\ref{4system002}) has a solution $(X,Y)$, where $Y$ is $\eta$-Hermitian, if and only if the ranks satisfy:
\begin{align}\label{rank03}
r(A,B,C^{\eta*},D)=r(B,C^{\eta*},D),
\end{align}
\begin{align}
r\begin{pmatrix}A&B&C^{\eta*}\\D^{\eta*}&0&0\end{pmatrix}=r(B,C^{\eta*})+r(D),
\end{align}
\begin{align}
r\begin{pmatrix}A&B&D\\B^{\eta*}&0&0\end{pmatrix}=r(B,D)+r(B),
\end{align}
\begin{align}
r\begin{pmatrix}A&C^{\eta*}&D\\C&0&0\end{pmatrix}=r(C^{\eta*},D)+r(C),
\end{align}
\begin{align}\label{rank04}
r\begin{pmatrix}
A&0&B&0&D\\
0&-A&0&C^{\eta*}&D\\
B^{\eta*}&0&0&0&0\\
0&C&0&0&0\\
D^{\eta*}&D^{\eta*}&0&0&0\end{pmatrix}=2r\begin{pmatrix}B&0&D\\0&C^{\eta*}&D\end{pmatrix}.
\end{align}
In this case, the general solution to (\ref{4system002}) can be expressed as
\begin{align*}
X=T_{1}\widehat{X}T_{2}^{\eta*},\quad Y=T_{3}\widehat{Y}T_{3}^{\eta*},
\end{align*}
where
\begin{align}\label{hhequX}
\widehat{X}=
\begin{pmatrix}
X_{11}&X_{12}&A_{18}&X_{14}&A_{12}-X_{24}^{\eta*}&X_{16}\\
A_{26}&A_{27}&A_{28}&X_{24}&\frac{1}{2}A_{22}+Z&X_{26}\\
A_{36}-A_{34}&X_{32}&A_{38}&X_{34}&A_{23}^{\eta*}&X_{36}\\
A_{46}-A_{44}&A_{47}-A_{67}&A_{48}&A_{14}^{\eta*}-A_{16}^{\eta*}+X_{11}^{\eta*}&A_{24}^{\eta*}&X_{46}\\
A_{56}&A_{57}&A_{58}&A_{15}^{\eta*}&A_{25}^{\eta*}&X_{56}\\
X_{61}&X_{62}&X_{63}&X_{64}&X_{65}&X_{66}
\end{pmatrix}
\end{align}
\begin{align}\label{hhequY}
\widehat{Y}=
\begin{pmatrix}
A_{99}&A_{49}^{\eta*}&A_{79}^{\eta*}&A_{39}^{\eta*}&A_{19}^{\eta*}&Y_{16}\\
A_{49}&A_{44}&A_{67}&A_{34}^{\eta*}&A_{14}^{\eta*}-X_{44}&Y_{26}\\
A_{79}&A_{67}^{\eta*}&A_{77}&A_{37}^{\eta*}-X_{32}^{\eta*}&A_{17}^{\eta*}-X_{12}^{\eta*}&Y_{36}\\
A_{39}&A_{34}&A_{37}-X_{32}&A_{33}&A_{13}^{\eta*}-X_{34}&Y_{46}\\
A_{19}&A_{14}-X_{44}^{\eta*}&A_{17}-X_{12}&A_{13}-X_{34}^{\eta*}&A_{11}-X_{14}-X_{14}^{\eta*}&Y_{56}\\
Y_{16}^{\eta*}&Y_{26}^{\eta*}&Y_{36}^{\eta*}&Y_{46}^{\eta*}&Y_{56}^{\eta*}&Y_{66}
\end{pmatrix}
\end{align}
in which $Y_{66}$ and $Z$  are arbitrary
$\eta$-Hermitian matrices and skew-$\eta$-Hermitian matrices over $\mathbb{H}$ with appropriate sizes,  the remaining $X_{ij}$ and $Y_{ij}$  are arbitrary matrices over $\mathbb{H}$
with appropriate sizes.

\end{theorem}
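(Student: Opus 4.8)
The plan is to mirror the strategy used in the proof of Theorem~\ref{theorem04}, but adapted to the equation $BXC + (BXC)^{\eta*} + DYD^{\eta*} = A$. First I would apply Theorem~\ref{theorem01} to the matrix array $(A, B, C^{\eta*}, D)$ — note that $C$ appears in the equation multiplied on the \emph{right}, so it is $C^{\eta*}$ (with $m$ rows) that plays the role of the third matrix in the decomposition. This produces invertible $P, T_1, T_2, T_3$ with $PAP^{\eta*} = S_A$, $PBT_1 = S_B$, $P C^{\eta*} T_2 = S_C$, $PDT_3 = S_D$ in the canonical forms $(\ref{4equ022})$--$(\ref{4equ023})$. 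Multiplying the equation on the left by $P$ and on the right by $P^{\eta*}$, and inserting $T_1 T_1^{-1}$, $T_2 T_2^{-1}$, $T_3 T_3^{-1}$ appropriately, reduces $(\ref{4system002})$ to the canonical equation $S_B \widehat{X} S_C^{\eta*} + (S_B \widehat{X} S_C^{\eta*})^{\eta*} + S_D \widehat{Y} S_D^{\eta*} = S_A$, where $\widehat{X} = T_1^{-1} X T_2^{-\eta*}$ is an unconstrained block matrix partitioned $6\times 6$ and $\widehat{Y} = T_3^{-1} Y T_3^{-\eta*} = \widehat{Y}^{\eta*}$.

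Next I would carry out the block multiplication explicitly. Because $S_B$, $S_C$, $S_D$ are $0/1$ block matrices selecting and placing identity blocks, the product $S_B \widehat{X} S_C^{\eta*}$ simply relocates the blocks $X_{ij}$ into prescribed positions of an $11\times 11$ block array, and similarly for $S_D \widehat{Y} S_D^{\eta*}$; adding the $\eta*$-transpose of the first term symmetrizes it. Equating entry-by-entry with $S_A$ (whose last block-row and block-column are $0$ except for the trailing nonsingular $\Sigma$) yields a system of block equations. Those not involving the unknowns force $\Sigma = 0$ together with a list of vanishing/symmetry conditions on the $A_{ij}$ — exactly the analogue of $(\ref{equh031})$--$(\ref{equh032})$; the remaining equations express certain $A_{ij}$ as sums of $X$- and $Y$-blocks, and solving them produces the parametrized forms $(\ref{hhequX})$--$(\ref{hhequY})$, with the free parameters ($Y_{66}$ $\eta$-Hermitian, $Z$ skew-$\eta$-Hermitian from the diagonal splitting of a $\frac12 A_{22}$-type entry, and the rest arbitrary). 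The converse direction is the same computation read backwards: plugging $(\ref{hhequX})$--$(\ref{hhequY})$ back in verifies it is a solution whenever the consistency conditions hold.

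Finally I would translate the internal consistency conditions on the $A_{ij}$ and $\Sigma$ into the rank equalities $(\ref{rank03})$--$(\ref{rank04})$. Each rank identity is matched to a subset of the block conditions by computing the rank of the relevant partitioned matrix after the congruence/equivalence transformations induced by $\operatorname{diag}(P, \text{perms})$ and the $T_i$: since these transformations are invertible, the rank of each displayed block matrix in $(\ref{rank03})$--$(\ref{rank04})$ equals the rank of its canonical counterpart built from $S_A, S_B, S_C, S_D$, which is a sum of certain $m_i$'s plus contributions from the nonzero $A_{ij}$ and $\Sigma$; the identity holds precisely when those extra contributions vanish. The main obstacle will be the bookkeeping in this last step: verifying that the particular five rank equalities chosen are jointly equivalent to the \emph{full} list of block conditions (no condition left uncovered, none over-counted), especially the last one $(\ref{rank04})$ involving the $5\times 5$ block matrix, which must encode the symmetry-type constraint (the analogue of $A_{46} = A_{46}^{\eta*}$) rather than a mere vanishing. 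I expect this to require the same $8\times 8$ linear-algebra identity among the $r(\cdot)$ quantities that appears at the end of the proof of Theorem~\ref{theorem01}, used here to confirm the rank counts are consistent.
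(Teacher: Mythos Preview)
Your proposal is correct and follows essentially the same route as the paper: apply Theorem~\ref{theorem01} to $(A,B,C^{\eta*},D)$, reduce (\ref{4system002}) to the canonical equation in $S_A,S_B,S_C,S_D$, compare block-by-block to obtain the consistency conditions and the parametrized solution (\ref{hhequX})--(\ref{hhequY}), and then match each rank identity (\ref{rank03})--(\ref{rank04}) to a subset of those block conditions. The only minor remark is that the paper does not invoke the $8\times 8$ linear system from Theorem~\ref{theorem01} in this step; it simply verifies each rank equivalence directly from the canonical forms, and the symmetry-type condition encoded by (\ref{rank04}) is $A_{44}=A_{66}$ (together with $\Sigma=0$) rather than an $\eta$-Hermicity of a single block.
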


\begin{proof}
 Note that the dimensions of the coefficient matrices $A,B,C^{\eta*},$ and $D$ in real quaternion matrix equation (\ref{4system002}) have the same number of rows. Hence, the coefficient matrices $A,B,C,D$ can be arranged in the following matrix array
\begin{align*}
\begin{pmatrix}A&B&C^{\eta*}&D\end{pmatrix}.
\end{align*}It follows from Theorem \ref{theorem01} that there exist
 $P\in GL_{m}(\mathbb{H}), ~T_{1}\in GL_{p_{1}}(\mathbb{H}),~
T_{2}\in GL_{p_{2}}(\mathbb{H}),~T_{3}\in GL_{p_{3}}(\mathbb{H}),$ such that%
\begin{align*}
PA P^{\eta*}=S_{A},\qquad PB T_{1}=S_{B},\qquad PC^{\eta*} T_{2}=S_{C},\qquad PDT_{3}=S_{D},
\end{align*}where $S_{A},S_{B},S_{C},$ and $S_{D}$ are given in (\ref{4equ022}) and (\ref{4equ023}). Hence the real quaternion matrix equation (\ref{4system002}) is equivalent to the real quaternion matrix equation
\begin{align*}
&P^{-1}S_{B}(T_{1}^{-1}XT_{2}^{-\eta*})S_{C}^{\eta*}P^{-\eta*}+P^{-1}S_{C}(T_{2}^{-1}X^{\eta*}T_{1}^{-\eta*})S_{B}^{\eta*}P^{-\eta*}
+P^{-1}S_{D}(T_{3}YT_{3}^{\eta*})S_{D}^{\eta*}P^{-\eta*}\\
&=P^{-1}S_{A}P^{-\eta*},
\end{align*}
i.e.,
\begin{align}\label{hhequ041}
S_{B}(T_{1}^{-1}XT_{2}^{-\eta*})S_{C}^{\eta*}+S_{C}(T_{2}^{-1}X^{\eta*}T_{1}^{-\eta*})S_{B}^{\eta*}
+S_{D}(T_{3}YT_{3}^{\eta*})S_{D}^{\eta*}=S_{A}.
\end{align}
Let the matrices
\begin{align} \label{hhequ042}
\widehat{X}=T_{1}^{-1}XT_{2}^{-\eta*}=\begin{pmatrix}X_{11}&\cdots&X_{16}\\
\vdots&\ddots&\vdots\\
X_{16}^{\eta*}&\cdots&X_{66}\end{pmatrix},
\end{align}
\begin{align}\label{hhequ043}
\widehat{Y}=T_{3}^{-1}YT_{3}^{-\eta*}=\begin{pmatrix}Y_{11}&\cdots&Y_{16}\\
\vdots&\ddots&\vdots\\
Y_{16}^{\eta*}&\cdots&Y_{66}\end{pmatrix}=\widehat{Y}^{\eta*},
\end{align}
be partitioned in accordance with (\ref{hhequ041}). Substituting $\widehat{X}$ and $\widehat{Y}$ of (\ref{hhequ042}) and (\ref{hhequ043}) into (\ref{hhequ041}) yields
\begin{align*}
\begin{pmatrix}\begin{smallmatrix}
X_{14}+X_{14}^{\eta*}+Y_{55}&X_{15}+X_{24}^{\eta*}&X_{34}^{\eta*}+Y_{45}^{\eta*}&X_{44}^{\eta*}+Y_{25}^{\eta*}&X_{54}^{\eta*}
&X_{11}+Y_{25}^{\eta*}&X_{12}+Y_{35}^{\eta*}&X_{13}&Y_{15}^{\eta*}&0&0\\
X_{24}+X_{15}^{\eta*}&X_{25}+X_{25}^{\eta*}&X_{35}^{\eta*}&X_{45}^{\eta*}&X_{55}^{\eta*}&X_{21}&X_{22}&X_{23}&0&0&0\\
X_{34}+Y_{45}&X_{35}&Y_{44}&Y_{24}^{\eta*}&0&X_{31}+Y_{24}^{\eta*}&X_{32}+Y_{34}^{\eta*}&X_{33}&Y_{14}^{\eta*}&0&0\\
X_{44}+Y_{25}&X_{45}&Y_{24}&Y_{22}&0&X_{41}+Y_{22}&X_{42}+Y_{23}&X_{43}&Y_{12}^{\eta*}&0&0\\
X_{54}&X_{55}&0&0&0&X_{51}&X_{52}&X_{53}&0&0&0\\
X_{11}^{\eta*}+Y_{25}&X_{21}^{\eta*}&X_{31}^{\eta*}+Y_{24}&X_{41}^{\eta*}+Y_{22}&X_{51}^{\eta*}&Y_{22}&Y_{23}&0&Y_{12}^{\eta*}&0&0\\
X_{12}^{\eta*}+Y_{35}&X_{22}^{\eta*}&X_{32}^{\eta*}+Y_{34}&X_{42}^{\eta*}+Y_{23}^{\eta*}&X_{52}^{\eta*}&Y_{23}^{\eta*}&Y_{33}&0&Y_{13}^{\eta*}&0&0\\
X_{13}^{\eta*}&X_{23}^{\eta*}&X_{33}^{\eta*}&X_{43}^{\eta*}&X_{53}^{\eta*}&0&0&0&0&0&0\\
Y_{15}&0&Y_{14}&Y_{12}&0&Y_{12}&Y_{13}&0&Y_{11}&0&0\\
0&0&0&0&0&0&0&0&0&0&0\\
0&0&0&0&0&0&0&0&0&0&0\end{smallmatrix}
\end{pmatrix}
\end{align*}
\begin{align}\label{hhequ045}
=\begin{pmatrix}
A_{11} & \cdots&   A_{19}&   A_{1,10}&0\\
\vdots& \ddots&   \vdots&\vdots&\vdots\\
A_{19}^{\eta*}& \cdots&A_{99}&A_{9,10}& 0\\
A_{1, 10}^{\eta*}& \cdots&A_{9,10}^{\eta*}&0&0\\0&\cdots&0 &0&\Sigma
\end{pmatrix}.
\end{align}

If the equation (\ref{4system002}) has a solution $(X,Y)$, then by (\ref{hhequ045}), we obtain that
\begin{align}\label{block01}
\Sigma=0,\quad \left(A_{1,10}^{\eta*},~  \cdots, ~A_{9,10}^{\eta*}\right)=0,\quad
A_{44}=A_{66},\quad A_{49}=A_{69},
\end{align}
\begin{align}\label{block02}
A_{29}=0,~A_{59}=0,~A_{89}=0,~A_{68}=0,~A_{78}=0,~A_{88}=0,~A_{35}=0,~A_{45}=0,~A_{55}=0.
\end{align}
and
\begin{align*}
X_{14}+X_{14}^{\eta*}+Y_{55}=A_{11},X_{15}+X_{24}^{\eta*}=A_{12},X_{34}^{\eta*}+Y_{45}^{\eta*}=A_{13},X_{44}^{\eta*}+Y_{25}^{\eta*}=A_{14},
X_{54}^{\eta*}=A_{15},
\end{align*}
\begin{align*}
X_{11}+Y_{25}^{\eta*}=A_{16},X_{12}+Y_{35}^{\eta*}=A_{17},X_{13}=A_{18},Y_{15}^{\eta*}=A_{19},X_{25}+X_{25}^{\eta*}=A_{22},X_{35}^{\eta*}=A_{23},
\end{align*}
\begin{align*}
X_{45}^{\eta*}=A_{24},X_{55}^{\eta*}=A_{25},X_{21}=A_{26},X_{22}=A_{27},X_{23}=A_{28},Y_{44}=A_{33},Y_{24}^{\eta*}=A_{34},
\end{align*}
\begin{align*}
X_{31}+Y_{24}^{\eta*}=A_{36},X_{32}+Y_{34}^{\eta*}=A_{37},X_{33}=A_{38},Y_{14}^{\eta*}=A_{39},Y_{22}=A_{44},X_{41}+Y_{22}=A_{46},
\end{align*}
\begin{align*}
X_{42}+Y_{23}=A_{47},X_{43}=A_{48},Y_{12}^{\eta*}=A_{49},X_{51}=A_{56},X_{52}=A_{57},X_{53}=A_{58},
\end{align*}
\begin{align*}
Y_{22}=A_{66},Y_{23}=A_{67},Y_{12}^{\eta*}=A_{69},Y_{33}=A_{77},Y_{13}^{\eta*}=A_{79},Y_{11}=A_{99}.
\end{align*}
Hence, the general solution $(X,Y)$ can be expressed as (\ref{hhequX}) and (\ref{hhequY}) by (\ref{hhequ045}).

Conversely, assume that the equalities in (\ref{block01}) and (\ref{block02}) hold, then by (\ref{hhequ042})-(\ref{hhequ045}), it can be
verified that the matrices have the forms of (\ref{hhequX}) and (\ref{hhequY}) is a solution of (\ref{hhequ045}), i.e., (\ref{4system002}).

We now want to prove that (\ref{rank03})-(\ref{rank04}) $\Longleftrightarrow$ (\ref{block01}) and (\ref{block02}). From $S_{A},S_{B},S_{C},$ and $S_{D}$ in Theorem \ref{theorem01}, we can infer that
\begin{align*}
r(A,B,C^{\eta*},D)=r(B,C^{\eta*},D) \Longleftrightarrow \left(A_{1,10}^{\eta*},~  \cdots, ~A_{9,10}^{\eta*}\right)=0,~\Sigma=0,
\end{align*}
\begin{align*}
r\begin{pmatrix}A&B&C^{\eta*}\\D^{\eta*}&0&0\end{pmatrix}=r(B,C^{\eta*})+r(D)\Longleftrightarrow A_{29}=0,~A_{89}=0,~A_{49}=A_{69},~\Sigma=0,
\end{align*}
\begin{align*}
r\begin{pmatrix}A&B&D\\B^{\eta*}&0&0\end{pmatrix}=r(B,D)+r(B)\Longleftrightarrow A_{68}=0,~A_{78}=0,~A_{88}=0,~A_{89}=0,~\Sigma=0,
\end{align*}
\begin{align*}
r\begin{pmatrix}A&C^{\eta*}&D\\C&0&0\end{pmatrix}=r(C^{\eta*},D)+r(C)\Longleftrightarrow A_{35}=0,~A_{45}=0,~A_{55}=0,~A_{59}=0,~\Sigma=0,
\end{align*}
\begin{align*}
r\begin{pmatrix}
A&0&B&0&D\\
0&-A&0&C^{\eta*}&D\\
B^{\eta*}&0&0&0&0\\
0&C&0&0&0\\
D^{\eta*}&D^{\eta*}&0&0&0\end{pmatrix}=2r\begin{pmatrix}B&0&D\\0&C^{\eta*}&D\end{pmatrix}
\Longleftrightarrow A_{44}=A_{66}=0,~\Sigma=0.
\end{align*}

\end{proof}

Next we give an example to illustrate Theorem \ref{theorem05}

\begin{example}
Given the real quaternion matrices:
\begin{align*}
B=\begin{pmatrix}1+\mathbf{j}&\mathbf{i}+\mathbf{k}&1+2\mathbf{i}+\mathbf{j}&-1-\mathbf{k}\\
\mathbf{i}-\mathbf{j}&-1-\mathbf{k}&-2+\mathbf{i}-\mathbf{j}&-\mathbf{i}+\mathbf{k}\end{pmatrix},
C=\begin{pmatrix}\mathbf{i}+\mathbf{j}&-2+\mathbf{k}\\
1+2\mathbf{j}&2\mathbf{i}+2\mathbf{k}\\
-\mathbf{i}+\mathbf{j}+\mathbf{k}&2-\mathbf{j}+\mathbf{k}\\
\mathbf{j}&\mathbf{k}\end{pmatrix},
\end{align*}
\begin{align*}
D=\begin{pmatrix}\mathbf{i}+\mathbf{j}&1+3\mathbf{i}&1+\mathbf{k}\\
-1+\mathbf{k}&-3+\mathbf{i}&\mathbf{i}-\mathbf{j}\end{pmatrix},
A=A^{\mathbf{i}*}=\begin{pmatrix}-16-6\mathbf{j}+34\mathbf{k}&9+17\mathbf{i}-31\mathbf{j}-3\mathbf{k}\\
9-17\mathbf{i}-31\mathbf{j}-3\mathbf{k}&-30+12\mathbf{j}-16\mathbf{k}\end{pmatrix}.
\end{align*}
Now we consider the $\mathbf{i}$-Hermitian solution to the real quaternion matrix equation (\ref{4system002}). Check that
\begin{align*}
r(A,B,C^{\eta*},D)=r(B,C^{\eta*},D)=2,
\end{align*}
\begin{align*}
r\begin{pmatrix}A&B&C^{\eta*}\\D^{\eta*}&0&0\end{pmatrix}=r(B,C^{\eta*})+r(D)=3,
\end{align*}
\begin{align*}
r\begin{pmatrix}A&B&D\\B^{\eta*}&0&0\end{pmatrix}=r(B,D)+r(B)=4,
\end{align*}
\begin{align*}
r\begin{pmatrix}A&C^{\eta*}&D\\C&0&0\end{pmatrix}=r(C^{\eta*},D)+r(C)=4,
\end{align*}
\begin{align*}
r\begin{pmatrix}
A&0&B&0&D\\
0&-A&0&C^{\eta*}&D\\
B^{\eta*}&0&0&0&0\\
0&C&0&0&0\\
D^{\eta*}&D^{\eta*}&0&0&0\end{pmatrix}=2r\begin{pmatrix}B&0&D\\0&C^{\eta*}&D\end{pmatrix}=8.
\end{align*}
All the rank equalities in (\ref{rank03})-(\ref{rank04}) hold. Hence, the real quaternion matrix equation (\ref{4system002}) has a  solution $(X,Y)$, where $Y$ is $\mathbf{i}$-Hermitian. Note that
\begin{align*}
X=\begin{pmatrix}
2+\mathbf{i}+\mathbf{k}&1+\mathbf{i}+\mathbf{j}&1&\mathbf{i}+\mathbf{k}\\
-1+\mathbf{k}&-\mathbf{i}+\mathbf{k}&\mathbf{j}&1\\
1+\mathbf{i}+\mathbf{j}+\mathbf{k}&1&1+\mathbf{j}&1+\mathbf{i}+\mathbf{k}\\
\mathbf{i}+\mathbf{j}+2\mathbf{k}&1-\mathbf{i}+\mathbf{k}&1+2\mathbf{j}&2+\mathbf{i}+\mathbf{k}
\end{pmatrix}
\end{align*}and
\begin{align*}
Y=Y^{\mathbf{i}*}=\begin{pmatrix}
1+\mathbf{j}&1+\mathbf{i}&\mathbf{j}\\
1-\mathbf{i}&\mathbf{k}&\mathbf{i}\\
\mathbf{j}&-\mathbf{i}&\mathbf{j}
\end{pmatrix}
\end{align*}
satisfy the real quaternion matrix equation (\ref{4system002}).
\end{example}

\section{\textbf{Conclusion}}

We have derived a simultaneous decomposition of four real quaternion matrices with the same row number
$(A,B,C,D),$ where $A=A^{\eta*}\in \mathbb{H}^{m\times m},
B\in \mathbb{H}^{m\times p_{1}},C\in \mathbb{H}^{m\times p_{2}},D\in \mathbb{H}^{m\times p_{3}}$. As applications of this simultaneous decomposition, we have presented necessary and sufficient conditions for the existence and the general $\eta$-Hermitian solution to the real quaternion matrix equation (\ref{4system001}). We have also given necessary and sufficient conditions for the existence and the general solution to the real quaternion matrix equation (\ref{4system002}). Some numerical examples are presented to illustrate the results.


\begin{thebibliography}{99}

\bibitem {CHU3}D.L. Chu, Y.S. Hung, H.J. Woerdeman, Inertia and rank characterizations of some
matrix expressions, \textit{SIAM J. Matrix Anal. Appl}. 31 (2009)\textbf{ }1187--1226.

\bibitem {dlch}D.L. Chu, B.De Moor, On a variational formulation of the QSVD
and the RSVD, \textit{Linear Algebra Appl}. 311 (2000)\textbf{ } 61--78.

\bibitem {CHU5}D.L Chu, L. De Lathauwer, B. De Moor,  A QR-type reduction for computing the SVD
of a general matrix product/quotient, \textit{Numer. Math}.
95 (2) (2003)\textbf{ }101--121.

\bibitem {CHU10}D.L Chu, L. De Lathauwer, B. De Moor, On the computation of the restricted singular value decomposition via the cosine-sine decomposition,
\textit{SIAM J. Matrix Anal. Appl.} 22 (2000)\textbf{ }580-601.


\bibitem {tree3}B. De Moor, H.Y. Zha, A tree of generalization of the ordinary
singular value decomposition, \textit{Linear Algebra Appl}. 147
(1991) \textbf{ }469--500.


\bibitem {moor1}B. De Moor, G.H. Golub, The restricted singular value decomposition:
properties and applications, \textit{SIAM J. Matrix
Anal. Appl}. 12 (3)
(1991) \textbf{ }401--425.





\bibitem {moor3}B. De Moor, P. Van Dooren, Generalizations of the singular value and
QR decompositions, \textit{SIAM J. Matrix
Anal. Appl}. 13 (4)
(1992) \textbf{ }993--1014.




\bibitem {moorbelgium}B. De Moor, On the structure of generalized singular value and QR decompositions,
\textit{SIAM J. Matrix
Anal. Appl}. 15 (1)
(1994) \textbf{ }347--358.





\bibitem {xibanyalama}F. De Ter$\acute{a}$n, The solution of the equation $AX+B X^{*}=0$, \textit{Linear and Multilinear Algebra} 61 (12) (2013)\textbf{ }  1605--1628.

\bibitem {8}M. Dehghan, B. Hashemi, M. Ghatee, Computational methods for
solving fully fuzzy linear systems, \textit{Appl. Math. Comput}. 179
(2006)\textbf{ }328--343.

\bibitem {duan}X.F. Duan, Q.W. Wang, C.M. Li, Positive definite solution of a class of nonlinear matrix equation,
\textit{Linear and Multilinear Algebra} 62 (6) (2014)\textbf{ } 839--852.


\bibitem {Dmytryshyn}A. Dmytryshyn, B. K\aa gstr\"{o}m, Coupled Sylvester-type matrix equations and block diagonalization, \textit{SIAM J. Matrix Anal. Appl.} 36 (2)(2015)\textbf{ }580--593.


\bibitem{farid01}F.O. Farid, Z.H. He, Q.W. Wang, The consistency and the exact solutions to a system of matrix equations,
\textit{Linear and Multilinear Algebra} 64 (11) (2016) \textbf{ }2133--2158.

\bibitem {farid}F. O. Farid, M. S. Moslehian, Q.W. Wang, Z.C. Wu, On the Hermitian
solutions to a system of adjointable operator equations,
\textit{Linear Algebra Appl}. 437
(2012)\textbf{ }1854--1891.

\bibitem {golub01}G.H. Golub,  R. Christian, Singular value decomposition and least squares solutions, \textit{Numer. Math.} 14.5 (1970)\textbf{ } 403--420.


 \bibitem {helaa}   Z.H. He, O.M. Agudelo, Q.W. Wang, B. De Moor,  Two-sided coupled generalized Sylvester matrix
equations solving using a simultaneous decomposition for fifteen matrices, \textit{Linear Algebra Appl.} 496 (2016)\textbf{ }
549-593.

\bibitem {hewangamc2017}Z.H. He, Q.W. Wang, Y. Zhang, Simultaneous decomposition of quaternion matrices involving $\eta$-Hermicity with applications,
\textit{Appl. Math. Comput}.  298 (2017)\textbf{ }13--35.

\bibitem {wanghe2}Z.H. He, Q.W. Wang, A real quaternion matrix equation with applications,
\textit{Linear and Multilinear Algebra} 61 (2013)\textbf{ }725--740.


\bibitem {wanghe3}Z.H. He, Q.W. Wang, The $\eta$-bihermitian solution to a system of real
quaternion matrix equations, \textit{Linear and Multilinear Algebra} 62 (2014)\textbf{ }1509--1528.

\bibitem{hewang4} Z.H. He, Q.W. Wang, The general solutions to some systems
of matrix equations, \textit{Linear and Multilinear Algebra}
63 (10) (2015) \textbf{ }2017--2032.

\bibitem{helama2017} Z.H. He, Q.W. Wang, Y. Zhang, The complete equivalence canonical form of four matrices over an arbitrary division ring, \textit{Linear and Multilinear Algebra}, 2017, http://dx.doi.org/10.1080/03081087.2017.1284740.

\bibitem {hewangamc12}Z.H. He, Q.W. Wang,  Solutions to optimization problems on ranks and inertias of a matrix
function with applications, \textit{Appl. Math. Comput}. 219 (2012)\textbf{ }2989--3001.

\bibitem {heac2017}Z.H. He, Q.W. Wang, A system of periodic discrete-time coupled Sylvester
quaternion matrix equations, \textit{ Algebra Colloq.} \textbf{24} (2017) 169--180.


\bibitem {FZhang3}R.A. Horn, F.Z. Zhang, A generalization of the complex
Autonne-Takagi factorization to quaternion matrices, \textit{Linear and
Multilinear Algebra} 60 (11-12) (2012)\textbf{ }1239-1244.

\bibitem{alter03} A. Orly, P.O. Brown, D. Botstein, Generalized singular value decomposition for comparative analysis of genome-scale expression data sets of two different organisms, \textit{Proc. Nat. Acad. Sci. U.S.A.}  100 (6) (2003) \textbf{ }3351-3356.



\bibitem{alter02} A. Orly, G.H. Golub, Singular value decomposition of genome-scale mRNA lengths distribution reveals asymmetry in RNA gel electrophoresis band broadening, \textit{Proc. Nat. Acad. Sci. U.S.A.}  103 (32) (2006)\textbf{ } 11828-11833.

\bibitem {ccp02}C.C. Paige, M. Wei, History and generality of the CS decomposition, \textit{Linear Algebra Appl.} 208 (1994)\textbf{ }  303--326.


\bibitem {stewart}G.W. Stewart, On the early history of the singular value decomposition, \textit{SIAM Rev.} 35.4 (1993)\textbf{ }551--566.

\bibitem {C. Cheong Took1}C.C. Took, D.P. Mandic, Augmented second-order
statistics of quaternion random signals, \textit{Signal Processing} 91
(2011)\textbf{ }214-224.

\bibitem {C. Cheong Took2}C.C. Took, D.P. Mandic, The quaternion LMS algorithm
for adaptive filtering of hypercomplex real world processes, \textit{IEEE
Trans. Signal Process}. 57 (2009)\textbf{ }1316-1327.


\bibitem {C. Cheong Took4}C.C. Took, D.P. Mandic, F.Z. Zhang, On the unitary
diagonalization of a special class of quaternion matrices, \textit{Appl. Math.
Lett}. 24 (2011)\textbf{ }1806-1809.


\bibitem {QWWangandyushaowen}Q.W. Wang, J.W. van der Woude, S.W. Yu, An
equivalence canonical form of a matrix triplet over an arbitrary division ring
with applications, \textit{Sci. China Math.} 54 (5)(2011)\textbf{ }907--924.

\bibitem {wanghe4444444}Q.W. Wang, Z.H. He, Some matrix equations with applications,
\textit{Linear and Multilinear Algebra} 60 (2012)\textbf{ }1327--1353.

\bibitem {zhangxia}Q.W. Wang, X. Zhang, J.W. van der Woude,  A new simultaneous decomposition of a matrix
quaternity over an arbitrary division ring with
applications, \textit{Comm. Algebra}. 40 (2012) 2309-2342.

\bibitem {wangheauto}Q.W. Wang, Z.H. He, Solvability conditions and general solution for the mixed
Sylvester equations,
\textit{Automatica} 49 (2013)\textbf{ }2713--2719.

\bibitem {Q.W12}Q.W. Wang, J.H. Sun, S.Z. Li, Consistency for
bi(skew)symmetric solutions to systems of generalized Sylvester equations over
a finite central algebra, \textit{Linear Algebra Appl.} 353 (2002)\textbf{ }169-182.

\bibitem {auto001}Q.W. Wang, Z.H. He, Systems of coupled generalized Sylvester matrix equations,
\textit{Automatica} 50 (2014)\textbf{ }2840--2844.

\bibitem {Q.W5}Q.W. Wang, A system of matrix equations and a linear matrix equation over
arbitrary regular rings with identity, \textit{Linear Algebra Appl.}
384 (2004)\textbf{ }43--54.

\bibitem {41} Q.W. Wang, The general solution to a system of real
quaternion matrix equations, \textit{ Comput. Math. Appl.} 49
(2005) \textbf{ }665--675.

\bibitem {Q.W11}Q.W. Wang, Bisymmetric and centrosymmetric solutions to system
of real quaternion matrix equations, \textit{Comput. Math. Appl}. 49
(2005)\textbf{ }641--650.

\bibitem {42} Q.W. Wang, H.X. Chang, C.Y. Lin, P-(skew)symmetric common
solutions to a pair of quaternion matrix equations, \textit{ Appl.
Math. Comput.}  195 (2008)\textbf{ } 721--732.

\bibitem {Q.W1123}Q.W. Wang, Z.H. He, A system of matrix equations and its applications,
\textit{Sci. China Math.} 56 (2013)\textbf{ } 1795--1820.

\bibitem {Q.W5}Q.W. Wang, J.W. van der Woude, H.X. Chang, A system of real
quaternion matrix equations with applications, \textit{Linear Algebra Appl.}
431 (2009)\textbf{ }2291--2303.

\bibitem {H.K.Wimmer}H.K. Wimmer, Consistency of a pair of generalized Sylvester equations.
\textit{IEEE Trans. on Automatic Control}. 39(1994)\textbf{ }1014-1016.

\bibitem {Wei}G.P. Xu, M.S. Wei, D.S. Zheng, On solution of matrix equation
$AXB+CYD=F$, \textit{Linear Algebra Appl.} 279 (1998)\textbf{ }93-109.

\bibitem {xuqingxiang}Q.X. Xu, Common Hermitian and positive solutions to the
adjointable operator equations $AX=C,XB=D$, \textit{Linear Algebra Appl.} 429
(2008)\textbf{ }1-11.

\bibitem {yuan3}S.F. Yuan,  Q.W. Wang, Z.P. Xiong, The least squares $\eta$-Hermitian problems of quaternion matrix equation $A^{H}XA+ B^{H}YB=C$,\textit{Filomat} 28 (2014)\textbf{ } 1153--1165.

\bibitem {yuan4}S.F. Yuan, Q.W. Wang, Two special kinds of least squares
solutions for the quaternion matrix equation $AXB+CXD=E$ , \textit{Electron.
J. Linear Algebra}. 23 (2012)\textbf{ }257--274.

\bibitem {wangronghao}Y. Zhang, R.H. Wang, The exact solution of a system of quaternion matrix equations
involving $\eta$-Hermicity, \textit{Appl. Math. Comput}. 222
(2013)\textbf{ }201--209.



\end{thebibliography}
\end{document}